	\theoremstyle{plain}
		\newtheorem{mainthm}{\textsc{Theorem}}
		\newtheorem{thm}{Theorem}[section]
		\newtheorem{cor}[thm]{Corollary}	 
		\newtheorem{maincor}{\textsc{Corollary}}
		\newtheorem{lem}[thm]{Lemma}		
		\newtheorem{prop}[thm]{Proposition}
	\theoremstyle{definition}
		\newtheorem{defn}[thm]{Definition}
	\theoremstyle{remark}
		\newtheorem{rem}[thm]{Remark}
\numberwithin{equation}{section}
\newcommand{\R}{\mathbb{R}}
\newcommand{\C}{\mathbb{C}}
\newcommand{\Z}{\mathbb{Z}}	
\renewcommand{\P}{\mathscr{P}}
\newcommand{\Lagr}{\Lambda}
\newcommand{\norm}[1]{\left\| #1 \right\|}		
\newcommand{\cfsa}{\mathscr{CF}^{sa}}
\newcommand{\im}{\mathrm{rge}\,}
\newcommand{\Mat}{\mathrm{Mat}\,}
\newcommand{\Sp}{\mathrm{Sp}\,}
\newcommand{\Lin}{\mathscr{L}}
\newcommand{\GL}{\mathrm{GL}}
\newcommand{\Id}{I}
\newcommand{\br}{~\\}
\DeclareMathOperator{\spfl}{sf}
\DeclareMathOperator{\sgn}{sgn}
\DeclareMathOperator{\iCLM}{\mu^{\scriptscriptstyle{\mathrm{CLM}}}}
\DeclareMathOperator{\irel}{i_{\textup{rel}}}
\DeclareMathOperator{\iRS}{\mu^{\textup{RS}}}
\DeclareMathOperator{\ispec}{i_{\textup{spec}}}
\DeclareMathOperator{\igeo}{\iota}
\DeclareMathOperator{\igeob}{\iota_b}
\renewcommand{\leq}{\leqslant}
\renewcommand{\geq}{\geqslant}
\renewcommand{\hat}{\widehat}
\renewcommand{\tilde}{\widetilde}
\renewcommand{\=}{\coloneqq}
\newcommand{\email}[1]{\href{mailto:#1}{\textsf{#1}}}
\title{Index theory for heteroclinic orbits\\ of\\  Hamiltonian systems}
\author{Xijun Hu\thanks{The author is partially supported by NSFC( No.11425105) 
and NCET.},  
Alessandro Portaluri, 
\thanks{The   
author is partially supported by the project ERC Advanced Grant 2013 
No.~339958 ``Complex Patterns for Strongly Interacting Dynamical Systems --- 
COMPAT” and by project  ``Semi-classical trace formulas and their application 
in physical chemistry". 
Ricerca locale 2015 No. Borr$\_$Rilo$\_$16$\_$01.} }
\date{\today}
\begin{document}
 \maketitle
 
\begin{abstract}
Index theory revealed its outstanding role in the study of 
periodic orbits of  Hamiltonian systems and the  dynamical consequences of 
this theory are enormous. 
Although  the index theory in the periodic case is 
well-established,  very few  results are known in the case of 
homoclinic orbits of Hamiltonian systems. Moreover, to the authors' knowledge, 
no 
results have been yet proved in the case of heteroclinic 
and halfclinic (i.e. parametrised by a half-line) orbits.

Motivated by the importance played by these motions  in understanding several 
challenging  problems in Classical Mechanics,  we develop a new index theory and 
we prove at once 
a general spectral flow formula for  heteroclinic, homoclinic and halfclinic 
trajectories. Finally we show how this  index theory can be used  to 
recover all the (classical) existing  results on orbits parametrised by bounded 
intervals. 

\vskip0.2truecm
\noindent
\textbf{AMS Subject Classification:} 53D12, 58J30, 34C37, 37C29.
\vskip0.1truecm
\noindent
\textbf{Keywords:} Maslov index, Spectral flow, Hamiltonian systems, 
Homoclinic, Heteroclinic and Halfclinic orbits.
\end{abstract}
\tableofcontents


\section*{Introduction}

Several central problems in Classical Mechanics involve unbounded trajectories 
of a phase flow or, more generally, of a one-parameter family of phase flows. 
Hamiltonian PDE's, e.g.  reaction-diffusion equations in one space dimension, 
such as fifth-order Kortweg-De-Vries (KdV), nonlinear 
Schr\"odinger (NLS) or longwave-shortwave resonance (LW-SW) equations, have the 
property that their steady 
part is a finite-dimensional Hamiltonian system. For these Hamiltonian systems, 
solitary wave solutions can be characterised 
as homoclinic and heteroclinic orbits, namely 
solutions parametrised by the whole real line 
and joining a saddle point to itself in  the former case, two different 
saddle points in the 
latter case. In this respect, the spectral problem 
associated with the linearisation around a 
given homoclinic motion leads to a one-parameter family of linear
Hamiltonian systems. 

Central configurations in the $N$-body problem with a general singular 
$\alpha$-homogeneous weak self-interacting potential (including the gravitation 
case) give rise 
to special asymptotic solutions (e.g. homographic as well as a class of 
colliding or 
parabolic motions) that represent an interesting 
class of motions parametrised by  {\em unbounded\/} intervals for which the 
index theory 
developed in  this paper could be successfully employed. (We refer the reader 
to 
\cite{HO16,BHPT17} and references therein). 
In the classical case of  orbits 
parametrised by   {\em bounded\/} intervals (for instance in the study of  
periodic orbits) 
spectral flow  formulas have been recently used in order  to tackle challenging 
linear 
stability problems. (We  refer the interested reader to \cite{HS09, HS10, 
HLS14, 
BJP14, BJP16} and references therein). 
Except these results in which an index theory for  homoclinic motions 
was developed, 
we only mention the paper \cite{CH07}, where the  authors assigned a 
geometrical index to any unbounded 
motion of a Hamiltonian system, and  \cite{Wat15} where a suitable 
spectral flow 
formula for a one-parameter 
family of Hamiltonian systems under homoclinic boundary conditions  was proved. 

Two are the main ingredients of the index theory developed in this paper. The  
first one is essentially based on a symplectic invariant known in literature as 
{\em Maslov index\/}, which, roughly speaking, 
counts algebraically  the intersections between a (continuous) curve of 
Lagrangian subspaces $\ell$  and 
a transversally oriented sub-variety (the singular Maslov cycle $\Sigma(L_0)$) 
of a fixed Lagrangian subspace $L_0$ in the Lagrangian Grassmannian manifold. 
\begin{figure}[ht]
 \centering
 \includegraphics[scale=0.2]{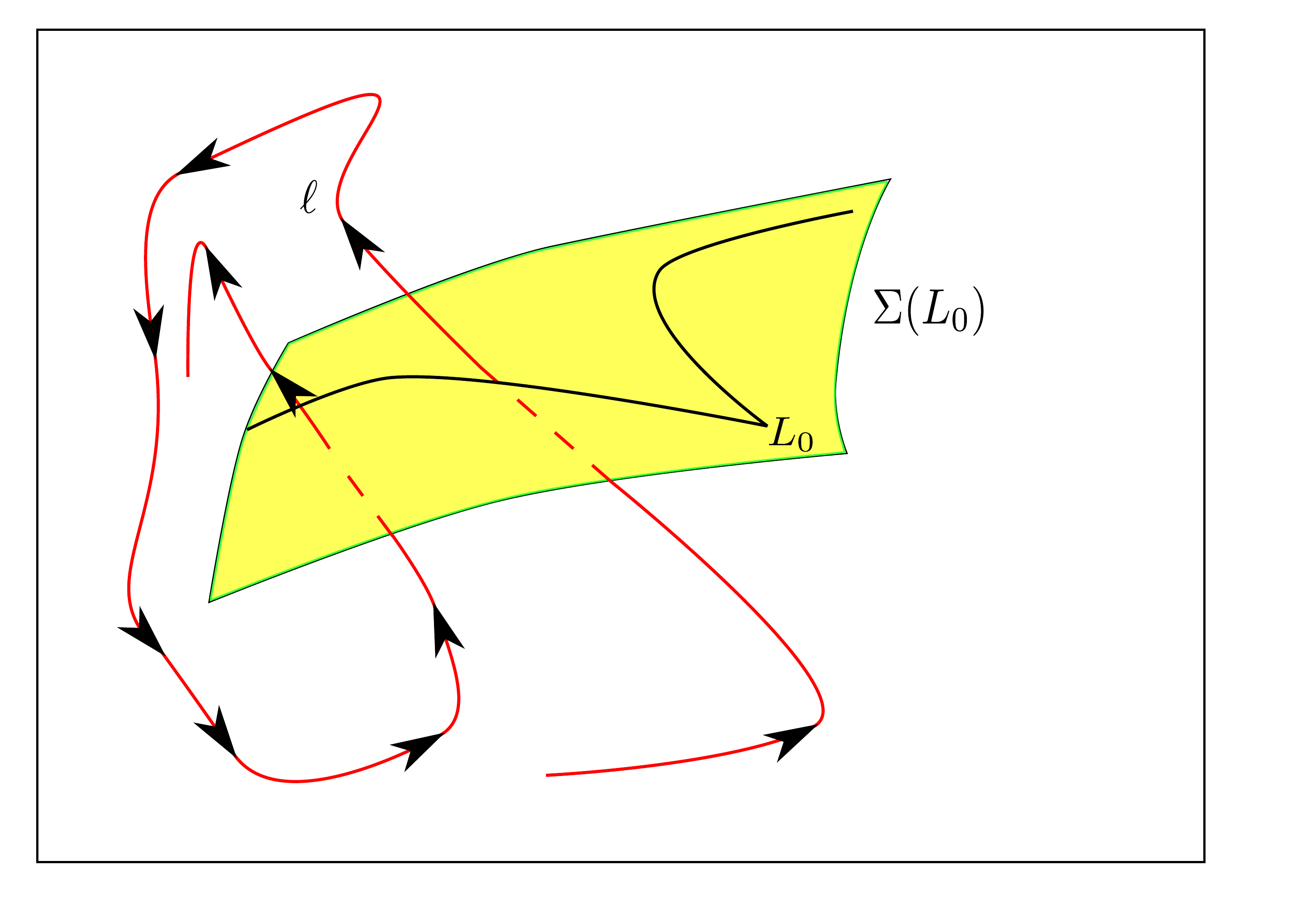}
\caption{$\Sigma(L_0)$ is the Maslov cycle (or train) with vertex $L_0$. The 
(red)  curve $\ell$ represents a 
smooth path  of Lagrangian subspaces. The black curve inside the cycle 
$\Sigma(L_0)$ represents the (singular)
higher codimensional stratum.}\label{fig:Maslov}
 \end{figure}
A strictly related notion that we essentially consider, is 
the Maslov index for (ordered) pairs of Lagrangian paths. (Cf. Section 
\ref{sec:spectral-flow-formula}, 
\cite{RS93, CLM94, ZL99} and reference therein). 

The second key ingredient is a well-known  topological invariant termed {\em 
spectral flow. \/}
The spectral flow is an integer-valued  homotopy invariant for paths of 
selfadjoint Fredholm 
operators that was introduced by 
Atiyah, Patodi and Singer in \cite{APS76}  in their investigation of index 
theory on manifolds with boundary. Roughly speaking, 
it counts the net  number of eigenvalues which pass through the zero in the 
positive direction when the parameter travels along the whole interval. 
Otherwise stated is the integer given by the number of 
negative eigenvalues that 
becomes positive minus the number of positive that become negative when the 
parameter runs along the whole interval. 
A related notion that we shall use is that of 
{\em $\varepsilon$-spectral flow\/} (cf. \cite{CLM94} and references therein),
denoted by $\varepsilon$-$\spfl$  which counts the 
number of eigenvalues $\lambda$ crossing the line $\lambda=\varepsilon >0$ with 
sign.  Due to its central importance in the (symplectic) 
Sturm theory (cf. \cite{Arn86}), in 
the study of conjugate and focal points along geodesics in semi-Riemannian 
manifolds (cf. \cite{GPP04, 
MPP05, MPP07, PPT04} and references therein), as well as  grading the Floer 
complex in  Floer homology (cf. \cite{APS08} and references therein),
only to mention very few of them, the literature on the subject is 
very broad. Among all of them we found particularly interesting and  
elegant at the same time the following papers that 
represent our basic references: \cite{CLM94,RS95,Phi96,FPR99,ZL99,BLP05}. 
Quoting the masterpiece of V.I. Arnol'd \cite{Arn67}, the idea behind 
the spectral flow formula can  be masterfully resumed 
by the following few lines
\begin{quote}
{\em [...] It turned out that there appeared in the asymptotic formulas certain 
integers, reflecting homological 
properties of curves on surfaces of the phase space and closely related to the 
Morse indices of the 
corresponding variational problems.\/}
\end{quote}
This  deep sentence essentially summarises the content of this 
paper in the case of  Hamiltonian systems and 
motions parametrised by (un)bounded intervals. So far, 
several constructions of the Maslov index and related indices (e.g the 
Maslov-type, 
the Conley-Zehnder, H\"ormander or four-fold, Kashiwara or  triple, Leray and 
Wall index, just to 
mention only a few of them) were constructed. We refer to  \cite{RS93}, 
\cite{CLM94},
\cite{Lon02} and references therein for an exhaustive account on the subject. In 
the present paper, 
we shall use the construction given in \cite{CLM94} (cf. also \cite{RS93}), 
where the authors associate a Maslov 
index to any ordered pair of continuous (and piecewise smooth) paths of 
Lagrangian subspaces, by assigning  an integer which heuristically counts 
algebraically (namely with 
signs and multiplicities) the number 
of non-trivial intersections  between the paths of Lagrangian subspaces. 

The paper is organised as follows.  Section \ref{sec:intro} is devoted to 
describe the problem,  to 
introduce the building blocks of the index theory constructed in the paper as 
well as to state and to 
give an account of the ideas behind the proof of the main results. Section 
\ref{sec:spectral-flow-and-Maslov} 
is dedicated to recall as well as to describe the main properties of the Maslov 
index for pairs as well 
as of the spectral flow for paths of closed selfadjoint Fredholm operators, 
which are behind the notion of the 
geometrical and the spectral index, respectively.  Section 
\ref{sec:spectral-flow-formula}, which represents the 
core of the paper, is devoted to prove the main results stated in Section 
\ref{sec:intro} whose proofs are 
scattered along the whole of Section.

\section{Description of the problem and main results}\label{sec:intro}

The goal of this section is to introduce the dynamical framework in order to 
describe the 
problem, to introduce the main definitions and  ingredients of the index theory 
both in the unbounded and bounded case. 
We conclude this section by stating the main results and discussing 
the principal consequences of the index theory constructed in the paper.

\subsection{Index theory for unbounded orbits}
Given the  $\mathscr C^2$-Hamiltonian function  
$H: \R \times \R^{2n} \to \R$, we 
start to consider the  Hamiltonian system 
\begin{equation}\label{eq:intro-nonlin-hamsys-intro-single}
 w'(t)=J\, \nabla H\big(t,w(t)\big), \qquad   t \in  \R
\end{equation}
where $'$ denotes the derivative with respect to the (time) $t$ variable, $ 
\nabla $  the gradient with 
respect to  the second variable and where $J$ is the standard complex structure
\[
 J= \begin{bmatrix}
     0 & -\Id_n\\
     \Id_n & 0
    \end{bmatrix}.
\]
We assume that $p, q \in \R^{2n}$  are two {\em restpoints\/} (or {\em 
equilibria\/}) for 
the Hamiltonian vectorfield; thus in particular
\[
\nabla H(t, p)= \nabla H(t, q)=0  \qquad \forall \, t \in \R.
\]
\begin{figure}[ht!]
 \centering
\includegraphics[scale=0.20]{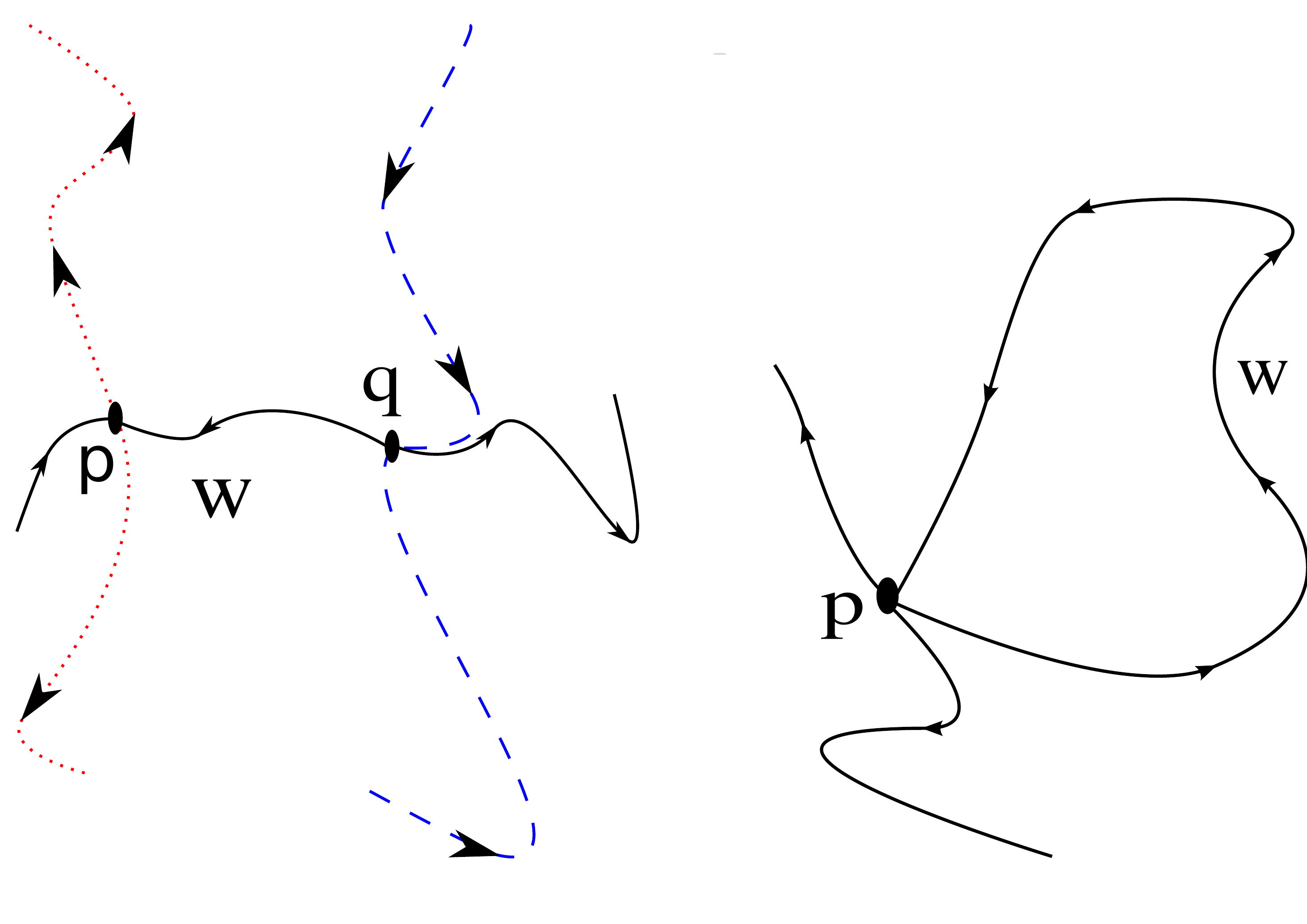}
\caption{On the left  is sketched a heteroclinic connection 
$w$ 
between the two saddle points 
$p$ and $q$. The (red) punctured  curve represents the unstable manifold of the 
point $p$ 
and the (blue) dashed curve 
represents the stable manifold of the point $q$. 
On the right is drawn a homoclinic orbit at  $p$.}\label{fig:connections}
\end{figure}
In the rest of the paper, we  
always assume that they are {\em hyperbolic\/}, in the sense that the spectrum 
of the  Hessian 
matrix of $H$ at $p$ and $q$ is off  the imaginary axis, namely
\[
\sigma\big( D^2 H(\cdot,p)\big) \cap i \R = \sigma\big( D^2 H(\cdot,q)\big) \cap 
i \R = \emptyset.
\]
A {\em heteroclinic connection between  $p$ and $q$\/} is 
a solution  of Equation \eqref{eq:intro-nonlin-hamsys-intro-single}  
satisfying the following asymptotic (boundary) conditions
\begin{equation}\label{eq:intro-bc-hetero-intro}
 \lim_{t\to-\infty}w(t)=p \quad \textrm{ and } \quad  \lim_{t \to 
+\infty}w(t)=q. 
\end{equation}
In the particular case where the orbit is asymptotic both in the past and in 
the future  to the same equilibrium point, namely
\begin{equation}\label{eq:intro-bc-homo-single}
\quad \lim_{|t|\to+\infty}w(t)=p\quad \Big(\textrm{or } \quad  \lim_{|t| \to 
+\infty}w(t)=q\Big),
\end{equation}
we shall refer to $w$ as {\em homoclinic solution at $p$ \/} (or $q$), 
respectively.

The other class of unbounded motions that we  introduce,  are termed 
{\em future halfclinic solutions\/} (resp. {\em past halfclinic solutions\/}).  
Let $L$  be a  Lagrangian  subspace and let $H: [0,+\infty) \times \R^{2n} \to 
\R$ be a $\mathscr C^2$-function. 
A  {\em future halfclinic solution at $q$\/} is a  solution of the  asymptotic 
boundary value problem
\begin{equation}\label{eq:intro-bc-half-future-single}
 \begin{cases}
w'(t)=J\, \nabla H\big(t,w(t)\big), \qquad   t \in  (0,+\infty)\\
w(0) \in L ,\ \lim_{t \to +\infty}w(t)=q.
 \end{cases}
 \end{equation}
 \begin{figure}[ht]
\centering
\includegraphics[scale=0.20]{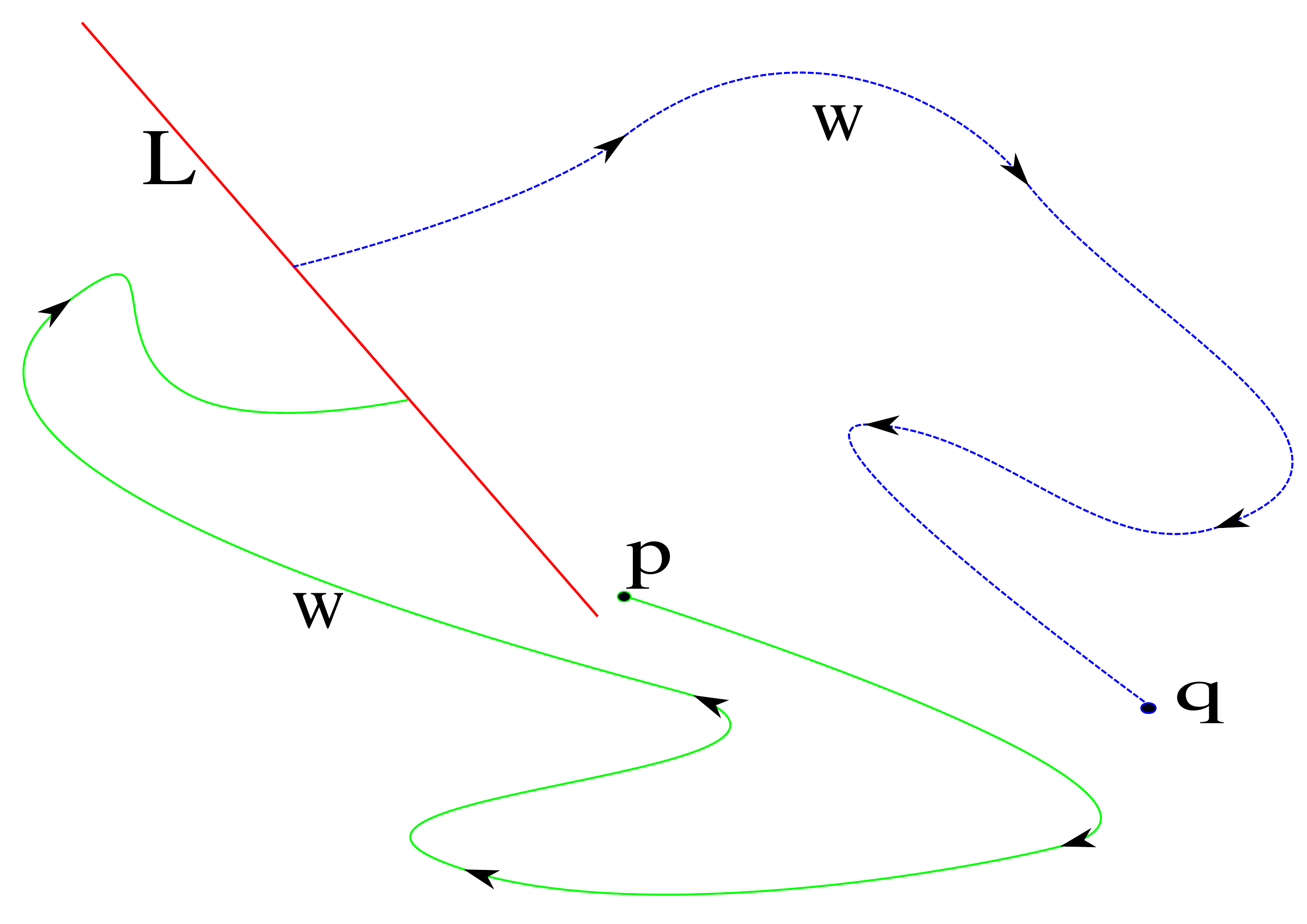}
\caption{A past halfclinic orbit (solid green line) between $p$ and $L$ 
and a future halfclinic orbit (dashed blu line) between $L$ and 
$q$.}\label{fig:halfline}
\end{figure}
Analogously, if  $H: (-\infty,0] \times \R^{2n} \to \R$ is of class $\mathscr 
C^2$, 
we define a  {\em past halfclinic solution at $p$\/} as a  
solution of the following asymptotic boundary value problem
\begin{equation}\label{eq:intro-bc-half-past-single}
  \begin{cases}
w'(t)=J\, \nabla H\big(t,w(t)\big), \qquad   t \in  (-\infty,0)\\
  \lim_{t \to -\infty}w(t)=p,\   w(0) \in L.\  
 \end{cases}
\end{equation}
By linearising Equation \eqref{eq:intro-nonlin-hamsys-intro-single} along  the 
{\em h-clinic 
solution\/}  $w$ (where {\em h-\/} stands for hetero or homo- or half), we 
end-up with the 
following linear Hamiltonian system 
\begin{equation}\label{eq:Ham-sys-bvp-het-hom-intro-single}
\begin{cases}
  z'(t)=J B(t)\, z(t), \qquad t \in\R\\
  \lim_{t \to +\infty} z(t)=0  \quad  \lim_{t \to -\infty} z(t)=0 
 \end{cases}
\end{equation}
in the heteroclinic/ homoclinic case and with
\begin{equation}\label{eq:Ham-sys-bvp-half-intro-single}
\begin{cases}
  z'(t)=J B(t)\, z(t), \qquad t \in [0,+\infty) \\
   z(0) \in L \textrm{ and } \lim_{t \to +\infty} z(t)=0 
 \end{cases}\quad \left(\textrm{resp. }
 \begin{cases}
  z'(t)=J B(t)\, z(t),\qquad t \in (-\infty,0] \\
  \lim_{t \to -\infty} z(t)=0 \textrm{ and }  z(0) \in L
 \end{cases}\right)
\end{equation}
in the future (resp. past) halfclinic case.
Denoting by  $\gamma_{\tau}$ the matrix solution of the 
Hamiltonian initial   value problem given in Equation 
\eqref{eq:intro-nonlin-hamsys-intro-single} 
such that $\gamma_{\tau}(\tau)=\Id$,  
we recall the  {\em stable  \/}  and the {\em unstable  subspaces\/} are 
respectively given by 
\begin{equation}\label{eq:stabili-intro-single}
  E^s(\tau)\=\Set{v \in \R^{2n}|\lim_{t\to +\infty} 
\gamma_{\tau}(t)\,v=0} \textrm{ and }
   E^u(\tau)\=\Set{v \in \R^{2n}|\lim_{t\to -\infty} 
\gamma_{\tau}(t)\,v=0}.
\end{equation}
Let us now consider the constant  solutions of the Hamiltonian system given in 
Equation \eqref{eq:intro-nonlin-hamsys-intro-single} at the restpoints 
$p,q$. Denoting by $B(-\infty)$ and by 
$B(+\infty)$ the 
linearisation of $\nabla H$ at (the constant solutions) $p$ and $q$ 
respectively, we get
\begin{equation}\label{eq:asympt-linear-system-intro-single}
   z'(t)=JB(\pm \infty)\, z(t), \qquad t \in \R\\
\end{equation}
and, by  the hyperbolicity assumption on $p$ and $q$, we get  
$\sigma\big(JB(\pm \infty)\big) \cap i \R= \emptyset$. We  let 
$S(t)\=JB(t)$,  $S(\pm \infty)\=JB(\pm \infty)$, 
and we set
\begin{multline}
 E^s(\pm \infty)\=\Set{v \in \R^{2n}|\lim_{t\to +\infty} \exp\big(t 
S(\pm\infty)\big)v=0} 
 \textrm{ and }\\
   E^u(\pm \infty)\=\Set{v \in \R^{2n}|\lim_{t\to -\infty}\exp\big(t 
S(\pm\infty)\big)v=0}.
\end{multline}\br
The invariant stable and unstable subspaces 
defined above are Lagrangian subspaces (cf., for instance, \cite{CH07, Wat15} 
and references therein) and  the following convergence result holds:
\begin{equation}\label{eq:convergence-single}
 \lim_{\tau \to +\infty}E^s(\tau)=E^s(+\infty) \quad
\textrm{ and } \quad  
 \lim_{\tau \to -\infty} E^u(\tau)= E^u(-\infty).
 \end{equation}
(Cf. \cite[Proposition 1.2]{AM03}). 
Following authors in \cite{CLM94} to each  ordered pair of Lagrangian paths 
\[
\tau \longmapsto \big(E^s(\tau),E^u(-\tau) \big),  \quad \tau \longmapsto 
\big(E^s(\tau),L\big) 
\quad \textrm{and finally } \tau \longmapsto \Big(L,E^u(-\tau)\Big)
\]
we can assign  an integer known as {\em 
Maslov index of the pair\/} $\iCLM$ which  heuristically counts  the 
nontrivial intersections (with sign) between the paths defining the pair 
when the parameter $\tau$  varies (Cf. Section 
\ref{sec:spectral-flow-and-Maslov}, for the definition). 

\begin{rem}
Several constructions for the Maslov index are available 
in the literature, but in this paper we essentially follows the ones given by 
authors in 
\cite{Arn67,RS93,CLM94, ZL99} and  especially by Cappell, Lee \& Miller in 
\cite{CLM94}. 
\end{rem}
We are now ready to associate to any h-clinic solution, the {\em geometrical 
index\/}. 
\begin{defn}\label{def:geometrical-index}
We define the {\em geometrical index\/} of the 
\begin{itemize}
\item {\em  heteroclinic\/} or {\em homoclinic orbit $w$\/}, as the integer 
given by  
 \[
  \igeo(w)\= -\iCLM\Big(E^s(\tau),E^u(-\tau); \tau \in [0,+\infty)\Big);
 \]
 \item  {\em  future halfclinic orbit $w$\/} between $L$ and $q$, as the integer 
given by 
\[
  \igeo(w)\= -\iCLM\Big(E^s(\tau),L; \tau \in [0,+\infty)\Big);   
\]
\item {\em  past halfclinic orbit $w$\/} between $p$ and $L$, as  the integer 
defined by
\[
  \igeo(w)\= -\iCLM\Big(L,E^u(-\tau); \tau \in [0,+\infty)\Big).
\]
\end{itemize}
\end{defn}
Let now $H:[0,1] \times \R \times \R^{2n}\to \R$ be a continuous map such that 
$H_\lambda\=H(\lambda, \cdot,\cdot): \R \times \R^{2n} \to \R$ is of class 
$\mathscr C^2$
for all $\lambda \in [0,1]$ 
and its derivatives depend continuously on $\lambda$.  We consider the 
one-parameter family of 
Hamiltonian systems 
\begin{equation}\label{eq:intro-nonlin-hamsys-intro}
 w'(t)=J\, \nabla H_\lambda\big(t,w(t)\big), \qquad   t \in  \R.
\end{equation}
For each $\lambda \in [0,1]$, let $p_\lambda, q_\lambda \in \R^{2n}$ such that
\[
 \nabla H_\lambda \big(t,p_\lambda\big)=\nabla H_\lambda 
\big(t,q_\lambda\big)=0, 
 \qquad \forall\, (\lambda,t) \in [0,1] \times \R
\]
we assume that they are  hyperbolic restpoints and let us 
denote by $w_\lambda$ the heteroclinic connection between $p_\lambda$ and 
$q_\lambda$. Furthermore,
let $\lambda\mapsto 
L_\lambda$  be a continuous path of  Lagrangian 
subspaces and we denote by $w_\lambda$ the past halfclinic connection between 
$p_\lambda$ and $L_\lambda$ and 
by $w_\lambda$ the future halfclinic connection between $L_\lambda$ and 
$q_\lambda$. 
By linearising  Equation \eqref{eq:intro-nonlin-hamsys-intro} along  the {\em 
h-clinic 
solution\/}  $w_\lambda$ we get the 
following linear Hamiltonian system 
\begin{equation}\label{eq:Ham-sys-bvp-het-hom-intro}
\begin{cases}
  z'(t)=J B_\lambda(t)\, z(t) \qquad t \in\R\\
  \lim_{t \to +\infty} z(t)=0  \quad  \lim_{t \to -\infty} z(t)=0 
 \end{cases}
\end{equation}
in the heteroclinic/ homoclinic case and 
\begin{equation}\label{eq:Ham-sys-bvp-half-intro}
\begin{cases}
  z'(t)=J B_\lambda(t)\, z(t) \qquad t \in [0,+\infty) \\
   z(0) \in L_\lambda \textrm{ and } \lim_{t \to +\infty} z(t)=0 
 \end{cases}\quad \left(\textrm{resp. }\begin{cases}
  z'(t)=J B_\lambda(t)\, z(t)\qquad t \in (-\infty,0] \\
  \lim_{t \to -\infty} z(t)=0 \textrm{ and }  z(0) \in L_\lambda 
 \end{cases}\right)
\end{equation}
in the future (resp. past) halfclinic case.
Denoting by  $\gamma_{(\tau,\lambda)}$ the  matrix solution of the 
Hamiltonian system given in Equation \eqref{eq:intro-nonlin-hamsys-intro}, 
such that $\gamma_{(\tau,\lambda)}(\tau)=\Id$,  
we recall the  {\em stable \/}  and the {\em unstable  subspaces\/} are 
respectively given by 
\begin{multline}\label{eq:stabili-intro}
  E^s_{\lambda}(\tau)\=\Set{v \in \R^{2n}|\lim_{t\to +\infty} 
\gamma_{(\tau,\lambda)}(t)\,v=0} \textrm{ and }\\
   E^u_{\lambda}(\tau)\=\Set{v \in \R^{2n}|\lim_{t\to -\infty} 
\gamma_{(\tau,\lambda)}(t)\,v=0}.
\end{multline}
Let us now consider the constant  solutions of the Hamiltonian system given in 
Equation 
\eqref{eq:intro-nonlin-hamsys-intro} at the restpoints 
$p_\lambda,q_\lambda$. By linearising along them and denoting by 
$B_\lambda(-\infty)$ and by 
$B_\lambda(-\infty)$ the 
linearisation of $\nabla H_\lambda$ along the (constant solutions) $p_\lambda$ 
and $q_\lambda$ 
respectively, we get
\begin{equation}\label{eq:asympt-linear-system-intro}
   z'(t)=JB_\lambda(\pm \infty)\, z(t), \qquad t \in \R\\
\end{equation}
and, by the hyperbolicity of $p_\lambda$ and $q_\lambda$, for each $\lambda \in 
[0,1]$, 
$\sigma\big(JB_\lambda(\pm \infty)\big) \cap i \R= \emptyset$. We  let 
$S_\lambda(t)\=JB_\lambda(t)$ and $S_\lambda(\pm \infty)\=JB_\lambda(\pm 
\infty)$.
Let us consider 
the  continuous two-parameters family of Hamiltonian matrices   $S:[0,1] \times 
\R \to \Mat(2n, \R)$ and 
assume that:
\begin{itemize}
\item[{\bf (H1)\/}] There exists two continuous paths that will be denoted by 
$\lambda \mapsto 
S_\lambda(+\infty)$ and 
$\lambda \mapsto S_\lambda(-\infty)$ such that
\[
S_\lambda(+\infty)=\lim_{t\to +\infty}S(\lambda,t) \quad \textrm{ and }  \quad
S_\lambda(-\infty)=\lim_{t\to -\infty} S(\lambda, t), \qquad \lambda\in [0,1]
\]
uniformly with respect to $\lambda$. Moreover  for every $\lambda \in [0,1]$
the matrices $S_\lambda(\pm \infty)$ are hyperbolic. 
\end{itemize}
We set
\begin{multline}
 E_\lambda^s(\pm \infty)\=\Set{v \in \R^{2n}|\lim_{t\to +\infty} \exp\big(t 
S_\lambda(\pm\infty)\big)v=0} 
 \textrm{ and }\\
   E_\lambda^u(\pm \infty)\=\Set{v \in \R^{2n}|\lim_{t\to -\infty}\exp\big(t 
S_\lambda(\pm\infty)\big)v=0}.
\end{multline}\br
Under the assumption (H1),  
the invariant stable and unstable subspaces 
defined above are Lagrangian subspaces (cf. Lemma \ref{thm:stable-lagrangians}) 
and by invoking \cite[Proposition 1.2]{AM03}, the following convergence result holds
\begin{equation}\label{eq:convergence}
 \lim_{\tau \to +\infty}E_\lambda^s(\tau)=E_\lambda^s(+\infty) \quad
\textrm{ and } \quad  
 \lim_{\tau \to -\infty} E_\lambda^u(\tau)= E_\lambda^u(-\infty).
 \end{equation}

To the linearised Hamiltonian system given in Equations 
\eqref{eq:Ham-sys-bvp-het-hom-intro} and \eqref{eq:Ham-sys-bvp-half-intro}, we 
associate respectively the  
closed selfadjoint Fredholm operators  
\begin{multline}\label{eq:cammino-intro}
A_\lambda\=  -J \dfrac{d}{dt}- B_\lambda(t) \subset  \mathscr D(A_\lambda) 
\subset L^2(\R;\R^{2n}) 
\to L^2(\R;\R^{2n})\textrm{ and }\\
A^+_\lambda\=  -J \dfrac{d}{dt}- B_\lambda(t) \subset  \mathscr D(A^+_\lambda) 
\subset L^2([0,+\infty);\R^{2n}) 
\to L^2([0,+\infty);\R^{2n})\\
\left(\textrm{resp. } 
A^-_\lambda\=  -J \dfrac{d}{dt}- B_\lambda(t) \subset  \mathscr D(A^-_\lambda) 
\subset L^2((-\infty,0];\R^{2n}) 
\to L^2((-\infty,0];\R^{2n})\right)
\end{multline}
where $\mathscr D(A_\lambda)=W^{1,2}(\R;\R^{2n})$ and $\mathscr 
D(A_\lambda^\pm)= W_\lambda^\pm$ for 
\begin{multline}\label{eq:w+o-intro}
W_\lambda^+\=\Set{u \in W^{1,2}([0,+\infty); \R^{2n})| u(0) \in L_\lambda}
 \textrm{ and } \\
 W_\lambda^-\=\Set{u \in W^{1,2}((-\infty,0]; \R^{2n})| u(0) \in L_\lambda}.
\end{multline}
Denoting by $\cfsa(V)$ the space of all closed selfadjoint Fredholm operators 
on the real separable 
Hilbert space $V$,  we have three well-defined (gap) continuous paths of closed 
selfadjoint Fredholm  operators
\begin{multline}
A:[0,1] \to \cfsa\big(L^2(\R, \R^{2n})\big) \textrm{ and }\\ 
A^+:[0,1] \to \cfsa\big(L^2([0,+\infty), \R^{2n})\big) \Big(\textrm{resp. } 
 A^-:[0,1] \to \cfsa\big(L^2((-\infty,0], \R^{2n})\big)\Big). 
 \end{multline}
Now, if $p_\lambda,q_\lambda$ are two families of  hyperbolic restpoints and 
$w_\lambda$ 
is a heteroclinic between 
them, we define the {\em spectral index of $w_\lambda$\/} as minus the spectral 
flow of $A$, i.e. 
\[
\ispec(w_\lambda; \lambda \in [0,1])\= -\spfl(A_\lambda; \lambda \in [0,1])
\]
and 
analogously if $L \in \mathscr C^0\big([0,1]; \Lagr(n)\big)$ and $w_\lambda$ is  
either 
a future  halfclinic between $L_\lambda $ and $p$ (resp. a past halfclinic 
between $q_\lambda$ and 
$L_\lambda$), we define the 
spectral index of $w_\lambda$ as the negative spectral flow of the paths $A^+$ 
or 
$A^-$, 
respectively; thus 
\[
\ispec(w_\lambda)\= -\spfl(A_\lambda^+; \lambda \in [0,1]) \quad 
\big(\textrm{resp. } 
\ispec(w_\lambda)\= -\spfl(A_\lambda^-; \lambda \in [0,1])\big) .
\]
(Cf. Sections \ref{sec:spectral-flow-and-Maslov} and Section 
\ref{sec:spectral-flow-formula} for 
further details).   Under this notation, the main result of this paper reads as 
follows. 
\begin{mainthm}{\bf (Index Theorem for families of h-clinic 
orbits)\/}\label{thm:main1-intro}
Let $p_\lambda,q_\lambda$ be two hyperbolic restpoints of the Hamiltonian system 
given in 
Equation \eqref{eq:intro-nonlin-hamsys-intro} 
and let $w_\lambda$ be a heteroclinic connection between them. Under the 
previous notation and if 
(H1) holds, we get 
\begin{itemize}
\item[]{\bf(heteroclinic/homoclinic case )\/}
\[
\begin{split}
 \ispec(w_\lambda;\lambda \in[0,1])=\igeo(w_1) - \igeo(w_0) + 
 \iCLM\big(E^s_\lambda(+\infty),E^u_\lambda(-\infty); \lambda \in [0,1]\big).
\end{split}
\]
\end{itemize}
Let $L \in \mathscr C^0\big([0,1]; \Lagr(n)\big)$ and let $w_\lambda$ either a 
future 
halfclinic solution between $L_\lambda$ and $q_\lambda$ or  
a past halfclinic solution between $p_\lambda$ and $L_\lambda$. Under the 
previous notation and if 
(H1) is fulfilled, then we have 
\begin{itemize}
\item[]  {\bf(future halfclinic case)\/}
\[
\begin{split}
\ispec(w_\lambda;\lambda \in[0,1])=\igeo(w_1) - \igeo(w_0)  + 
 \iCLM\big(E^s_\lambda(+\infty),L_\lambda; [0,1]\big)
\end{split}
\]
\item[]  {\bf(past halfclinic case)\/}
\[
\begin{split}
\ispec(w_\lambda;\lambda \in [0,1])=\igeo(w_1) - \igeo(w_0)  + 
 \iCLM\big(L_\lambda,E^u_\lambda(-\infty); [0,1]\big).
\end{split}
\]
\end{itemize}
\begin{figure}[ht]
  \centering
\includegraphics[scale=0.30]{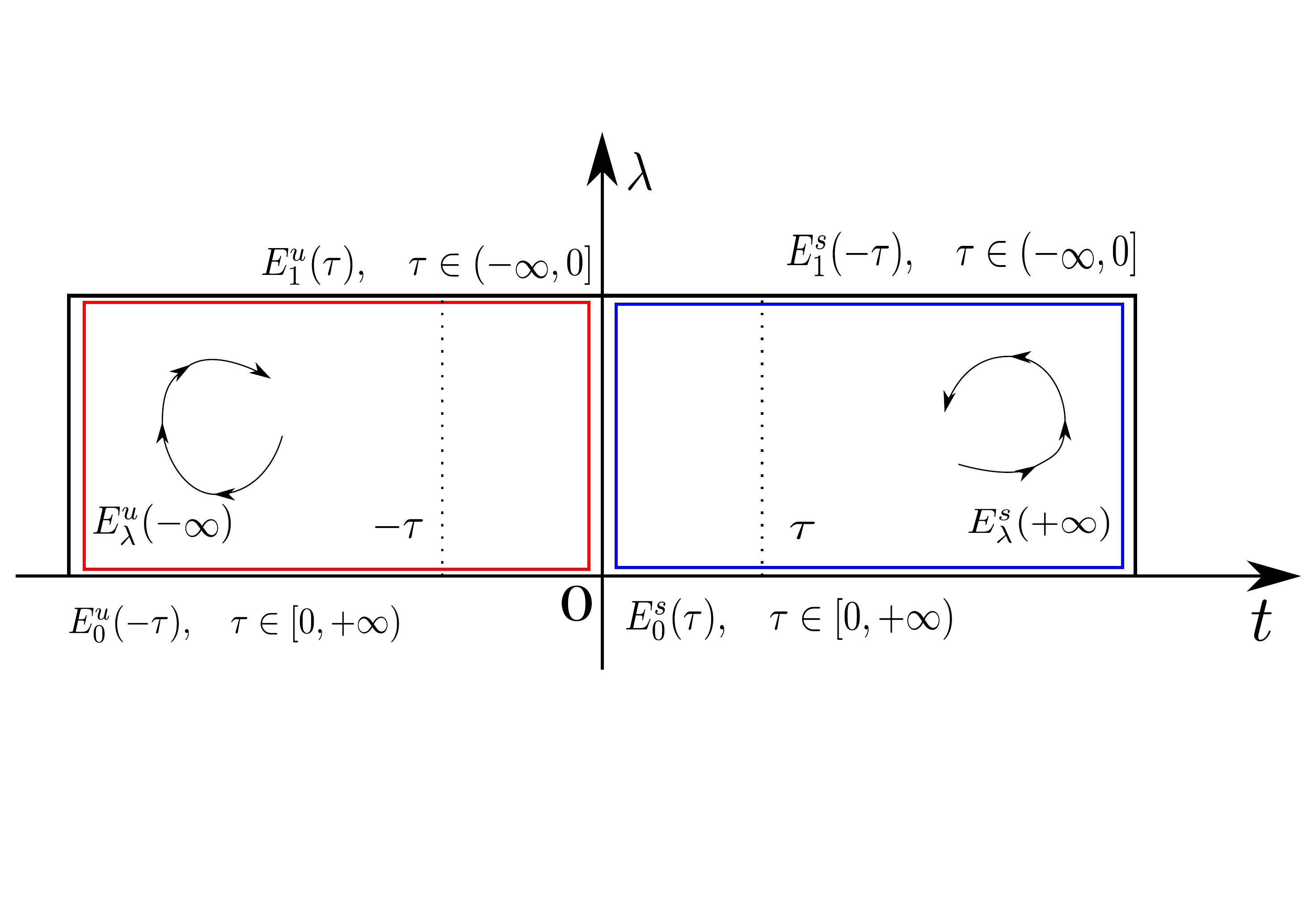}
  \caption{A sketch  of the stable (blue rectangle on the right) and 
unstable subspace 
 (red rectangle on the left) parameter spaces.}\label{fig:stable}
\end{figure} 
\end{mainthm}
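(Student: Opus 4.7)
The strategy is a \emph{rectangle argument} in the $(\lambda,\tau)$ parameter square $[0,1]\times[0,T]$ combined with the homotopy invariance of the CLM Maslov index and a spectral-flow-equals-Maslov-index identity at the finite boundary $\tau=0$. Concretely, I would consider the two-parameter family of Lagrangian pairs
\[
(\lambda,\tau)\longmapsto \bigl(E^s_\lambda(\tau),\,E^u_\lambda(-\tau)\bigr),
\]
which by (H1) and \eqref{eq:convergence} extends continuously to the compactification obtained by reparametrising $\tau\in[0,+\infty)$ as $\theta=\arctan\tau\in[0,\pi/2]$. The homotopy invariance and path-additivity of $\iCLM$ applied to the four sides of this rectangle give
\[
\iCLM(\text{bottom})+\iCLM(\text{right})=\iCLM(\text{left})+\iCLM(\text{top}).
\]
By Definition \ref{def:geometrical-index} the right and left sides equal $-\igeo(w_1)$ and $-\igeo(w_0)$ respectively, while letting $T\to+\infty$ the top side converges to $\iCLM\bigl(E^s_\lambda(+\infty),E^u_\lambda(-\infty);\lambda\in[0,1]\bigr)$. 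Rearranging produces the claimed heteroclinic/homoclinic formula, provided the bottom side is identified with $\ispec(w_\lambda)$.

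The crucial link is therefore the identity
\[
\spfl(A_\lambda;\lambda\in[0,1]) \;=\; -\iCLM\bigl(E^s_\lambda(0),\,E^u_\lambda(0);\lambda\in[0,1]\bigr).
\]
I would prove this by first observing that
\[
\ker A_{\lambda_0}\;\cong\;E^s_{\lambda_0}(0)\cap E^u_{\lambda_0}(0),
\]
since $z\in W^{1,2}(\R;\R^{2n})$ lies in $\ker A_{\lambda_0}$ iff $z$ solves the linear Hamiltonian system and decays at both $\pm\infty$. At a regular crossing $\lambda_0$ the spectral crossing form is $-\langle \dot B_{\lambda_0}(\cdot)z,z\rangle_{L^2(\R)}$ restricted to $\ker A_{\lambda_0}$, while the CLM crossing form for $(E^s_\lambda(0),E^u_\lambda(0))$ is built from the derivatives of the two Lagrangian paths against the symplectic form. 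Using the relation $z'=JB_\lambda(t)z$ together with integration by parts over $\R$ --- whose boundary terms vanish by the decay of $z$ at $\pm\infty$ --- these two forms coincide up to sign. The identity then extends from regular families to arbitrary ones by the standard concatenation and homotopy properties of spectral flow and $\iCLM$ recalled in Section \ref{sec:spectral-flow-and-Maslov}.

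For the halfclinic cases the rectangle argument is formally the same, with the ``fixed'' Lagrangian path along the vertical sides being the constant $\tau\mapsto L_\lambda$ rather than $\tau\mapsto E^u_\lambda(-\tau)$ (or $\tau\mapsto E^s_\lambda(\tau)$ in the past case). The needed spectral-flow identity becomes
\[
\spfl(A^+_\lambda;[0,1])=-\iCLM\bigl(E^s_\lambda(0),L_\lambda;[0,1]\bigr),
\]
and its symmetric analogue for $A^-_\lambda$, with $\ker A^+_{\lambda_0}\cong L_{\lambda_0}\cap E^s_{\lambda_0}(0)$; the half-line crossing-form computation is actually simpler than the full-line one, since the boundary term at $t=0$ from integration by parts vanishes exactly because $L_\lambda$ is Lagrangian. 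The main obstacle, and technical heart of the proof, is the full-line identity together with the $T\to+\infty$ limit in the rectangle. The latter requires showing that $E^s_\lambda(\tau)\cap E^u_\lambda(-\tau)=\{0\}$ for all sufficiently large $\tau$, uniformly in $\lambda$, so that the Maslov indices of the vertical paths stabilise; this uniform transversality is a consequence of the uniform convergence in (H1) together with the hyperbolicity of $S_\lambda(\pm\infty)$, which forces the stable and unstable subspaces at infinity to be complementary in a quantitative way.
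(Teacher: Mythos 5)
Your proposal and the paper share the same backbone: a stratum-homotopy "rectangle" argument on the two-parameter family $(\lambda,\tau)\mapsto\bigl(E^s_\lambda(\tau),E^u_\lambda(-\tau)\bigr)$ (this is exactly Proposition \ref{thm:additivity}), followed by an identification of the bottom side $\iCLM\bigl(E^s_\lambda(0),E^u_\lambda(0);\lambda\in[0,1]\bigr)$ with $-\spfl(A_\lambda;\lambda\in[0,1])$. Where you diverge is in how this last bridge is crossed. You propose a direct crossing-form comparison for $A_\lambda$ on $L^2(\R;\R^{2n})$, differentiating the boundary-value problem in $\lambda$ and integrating by parts over $\R$; this is the strategy of Chen--Hu \cite{CH07} and Waterstraat \cite{Wat15}, and it is viable, but it requires expressing the derivative of the infinite-dimensional stable and unstable manifolds $E^s_\lambda(0), E^u_\lambda(0)$ in terms of $\dot B_\lambda$ and controlling decay rates of kernel elements at $\pm\infty$ — none of which you carry out; "these two forms coincide up to sign" is the technical heart and it is asserted, not proved. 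The paper instead avoids this computation entirely: Proposition \ref{thm:stessi-spfl} shows that $A_\lambda$ and its restriction $A_{\lambda,[t_1,t_2]}$ to a finite window (with stable/unstable boundary conditions) have the same kernels and the same spectral flow, Proposition \ref{thm:prop-prima-uguaglianza} homotopes the finite-window operator to the model first-order operator $D_\lambda=-J\,d/dt$, and Proposition \ref{thm:main-nils} then invokes \cite[Theorem 0.4]{CLM94} for the $\varepsilon$-spectral flow of $D_\lambda$. The paper's route buys a cleaner reduction to a known theorem at the price of the intermediate window/rescaling lemmas; your route is more self-contained but the crossing-form identity for unbounded-domain operators needs a genuine argument. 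Finally, two small remarks: the "uniform transversality" $E^s_\lambda(\tau)\cap E^u_\lambda(-\tau)=\{0\}$ for large $\tau$ that you require is not actually needed — the $\iCLM$ index is defined and stratum-homotopy invariant for paths with nontransversal endpoints, so uniform convergence in (H1) alone suffices for the $T\to+\infty$ limit — and your claim that in the halfclinic case "the boundary term at $t=0$ vanishes because $L_\lambda$ is Lagrangian" only holds verbatim when $L_\lambda$ is constant; when $L_\lambda$ moves, that boundary term is precisely what produces the extra $\iCLM\bigl(E^s_\lambda(+\infty),L_\lambda;[0,1]\bigr)$ contribution, so it should be retained in the computation rather than discarded.
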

\begin{rem}
A few comments on  the proof of Theorem \ref{thm:main1-intro} are in order.  
The idea behind  
the spectral flow formulas in the heteroclinic/homoclinic and halfclinic cases 
relies
on the fact that either the path  $A$ or the paths $A^\pm$ (defined 
in Equation \eqref{eq:cammino-intro}) are stratum-homotopic to the real first 
order  selfadjoint elliptic operator 
\begin{equation}\label{eq:floer-intro}
D(E^u_\lambda(0), E^s_\lambda(0))\phi\=- J \dfrac{d\phi}{dt}
\end{equation}
on the domain $ W_\lambda\=\Set{u \in W^{1,2}([a,b];\R^{2n})| 
u(a) \in E^u_\lambda(0), \ u(b) \in E^s_\lambda(0)}$ and one of the 
main ingredient  of the proof (scattered all along the whole of 
Section \ref{sec:spectral-flow-formula}) is based on the $\varepsilon$-spectral 
flow as well as on \cite[Theorem 0.4]{CLM94} which will be crucial in our 
arguments.  
It is worth noticing that the crucial role of the operator given in Equation 
\eqref{eq:floer-intro} 
could be traced back to the works by  Floer \cite{Flo88a, Flo88b, Flo88c} and 
few years later to 
Yoshida \cite{Y0s91}.
\end{rem}

\begin{rem}
We observe that in the case of future and past  halfclinic orbits, 
the pairs of paths of 
Lagrangian subspaces to be considered are $E=\big(E_\lambda^s(0), 
L_\lambda\big)$ and    
$E=\big(L_\lambda, E_\lambda^u(0)\big)$. 
\end{rem}
As direct application of Theorem \ref{thm:main1-intro}, in the special case of a 
(single) Hamiltonian system, 
we get a spectral flow formula for a homoclinic  and future/past halfclinic  
orbits in terms of the {\em relative Morse index\/}.
(Cf. We refer the interested reader 
to Section \ref{sec:spectral-flow-formula} for the proof). 
\begin{maincor}{\bf (Index Theorem for homoclinic orbits of Chen and 
Hu)\/}\label{thm:main2-intro}
Let $H \in \mathscr C^2\big(\R \times \R^{2n}, \R\big)$ be a Hamiltonian 
function,  $p$ be a hyperbolic restpoint,  $w$ be a (homoclinic)  
solution of the Hamiltonian system
\begin{equation}\label{eq:homo-cor-intro}
\begin{cases}
 w' = J \nabla H\big(t, w(t)\big), \qquad t \in \R\\
 \lim_{|t|\to+\infty} w(t)=p.
\end{cases}
\end{equation}
We set $ B(t)\= D^2 H\big(t,  z(t)\big)$  and $B_*$ 
respectively the linearisation of $\nabla H$ along the homoclinic  orbit $z$ and 
along the constant 
solution $p$ and we assume that 
\[
\lim_{z\to p} D^2 H\big(t, z\big)=B_*
\]
uniformly with respect to $t \in \R$. Then, we have
\begin{equation}\label{eq:ll}
 \irel\left(- J \dfrac{d}{dt}- B_*, -J \dfrac{d}{dt}-  B(t) \right)
 = \igeo(w)
\end{equation}
where  we denoted by $\irel$  the {\em relative Morse index\/}(cf. Definition 
\ref{def:rel-morse-index}).
\end{maincor}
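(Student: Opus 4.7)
The plan is to deduce the corollary from the heteroclinic/homoclinic case of Theorem~\ref{thm:main1-intro} by homotoping, inside a one-parameter family of Hamiltonian systems sharing $p$ as a common hyperbolic restpoint, the constant linearisation $B_*$ to the actual Jacobi operator $B(t)$ along the homoclinic $w$. Concretely, I would set $B_\lambda(t) \= (1-\lambda)\,B_* + \lambda\,B(t)$ for $(\lambda,t)\in[0,1]\times\R$, realised e.g.\ as the linearisation of the family $H_\lambda\=(1-\lambda)\tfrac12 \langle B_*\cdot,\cdot\rangle + \lambda H$ along $w$. By the uniform-in-$t$ limit $D^2 H(t,z)\to B_*$ as $z\to p$ and the fact that $w(t)\to p$ as $|t|\to\infty$, we have $B(t)\to B_*$ uniformly at infinity; consequently $B_\lambda(\pm\infty)=B_*$ for every $\lambda$, and the hyperbolicity required by hypothesis (H1) is automatic and $\lambda$-uniform.

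The next step is to identify the two correction terms produced by Theorem~\ref{thm:main1-intro} with $0$. At $\lambda=0$ the system is autonomous, $z'(t)=JB_*z(t)$, hence $E^s_0(\tau)$ and $E^u_0(\tau)$ are constant in $\tau\in[0,+\infty)$; by Definition~\ref{def:geometrical-index} this forces $\igeo(w_0)=0$. Moreover, since $B_\lambda(\pm\infty)\equiv B_*$ for all $\lambda$, the asymptotic Lagrangian paths $\lambda\mapsto E^s_\lambda(+\infty)$ and $\lambda\mapsto E^u_\lambda(-\infty)$ are constant, so
\[
 \iCLM\bigl(E^s_\lambda(+\infty),\,E^u_\lambda(-\infty);\ \lambda\in[0,1]\bigr)=0.
\]

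Feeding these two vanishings into the heteroclinic/homoclinic formula of Theorem~\ref{thm:main1-intro} I obtain
\[
 \ispec(w_\lambda;\,\lambda\in[0,1])=\igeo(w_1)-\igeo(w_0)=\igeo(w),
\]
and unwinding $\ispec=-\spfl$ this reads $-\spfl(A_\lambda;\lambda\in[0,1])=\igeo(w)$ for the affine path $A_\lambda=-J\tfrac{d}{dt}-B_\lambda(t)$ joining $-J\tfrac{d}{dt}-B_*$ to $-J\tfrac{d}{dt}-B(t)$ in $\cfsa\big(L^2(\R;\R^{2n})\big)$. By Definition~\ref{def:rel-morse-index} of the relative Morse index as the negative spectral flow of such an affine path between self-adjoint Fredholm operators with invertible ``infinity'', this is exactly identity~\eqref{eq:ll}.

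The main obstacle I anticipate is of a technical nature rather than conceptual: namely, checking that the whole affine homotopy $A_\lambda$ stays inside $\cfsa\big(L^2(\R;\R^{2n})\big)$ so that both the spectral flow and the relative Morse index are well defined, and that the relative Morse index built from this specific homotopy agrees with the one introduced in Section~\ref{sec:spectral-flow-formula}. Both rest on the asymptotic behaviour $B_\lambda(t)\to B_*$ uniformly in $\lambda$ for $|t|\to\infty$, which makes every $A_\lambda$ Fredholm with hyperbolic asymptotic symbol, so the spectral flow is homotopy-invariant and independent of the chosen path between $-J\tfrac{d}{dt}-B_*$ and $-J\tfrac{d}{dt}-B(t)$.
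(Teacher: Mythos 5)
Your proposal is correct and follows essentially the same route as the paper: you take the affine homotopy $B_\lambda(t)=(1-\lambda)B_*+\lambda B(t)$ (which is exactly the path $A_\lambda=-J\tfrac{d}{dt}-B_*-\lambda\big(B(t)-B_*\big)$ used in the paper), observe that $\igeo(w_0)=0$ because the stable/unstable bundles are constant for the autonomous system $z'=JB_*z$, note that the asymptotic Lagrangians $E^s_\lambda(+\infty)$ and $E^u_\lambda(-\infty)$ are $\lambda$-independent so the correction $\iCLM$ term vanishes, and then match $\ispec=-\spfl$ against Definition~\ref{def:rel-morse-index}. Your write-up is somewhat more detailed than the paper's (you spell out the verification of (H1) and the $A$-compactness consideration), but the underlying argument is identical.
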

An analogous result holds in the halfclinic case. 
\begin{maincor}{\bf (Index Theorem for halfclinic  
orbits)\/}\label{thm:main3-final-intro}
Let $H \in \mathscr C^2\big([0,+\infty) \times \R^{2n}, \R\big)$ (resp.   
$H \in \mathscr C^2\big((-\infty,0] \times \R^{2n}, \R\big)$ be a Hamiltonian 
function,   $q$ be a hyperbolic restpoint,  $L \in \Lagr(n)$ and  $w$ be a 
future (resp. past)  
halfclinic solution of the Hamiltonian system
\begin{equation}\label{eq:harfe-cor-intro}
\begin{cases}
 w' = J \nabla H\big(t, w(t)\big), \qquad t \in [0,+\infty)\\
w(0) \in L \ \textrm{ and }\  \lim_{t\to+\infty} w(t)=q
\end{cases}
\left(\textrm{resp. }
\begin{cases}
 w' = J \nabla H\big(t, w(t)\big), \qquad t \in (-\infty,0]\\
\lim_{t\to-\infty} w(t)=p\ \textrm{resp.}\ w(0)
\end{cases}\right).
\end{equation}
Under the assumptions of Corollary \ref{thm:main2-intro},   we have  in both 
cases
\begin{equation}
 \irel\left(- J \dfrac{d}{dt}- B_*, -J \dfrac{d}{dt}-  B(t) \right)=\igeo(w).
\end{equation}
\end{maincor}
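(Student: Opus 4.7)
\emph{Plan.} The natural approach is to bootstrap from Theorem~\ref{thm:main1-intro} by choosing a one-parameter family of Hamiltonian operators whose spectral flow recovers the relative Morse index on the left-hand side of \eqref{eq:ll}, and whose geometric indices at the two endpoints can be read off directly. The idea is to homotope the frozen-coefficient problem at the equilibrium to the linearisation along the halfclinic orbit, keeping the Lagrangian boundary condition $L$ and the asymptotic matrix $B_*$ \emph{fixed} so that the ``Maslov correction at infinity'' drops out of the spectral flow formula.

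Concretely, for the future halfclinic case I would set
\[
B_\lambda(t) \= (1-\lambda)\,B_* + \lambda\, B(t), \qquad L_\lambda \equiv L, \qquad \lambda \in [0,1].
\]
Since by hypothesis $B(t) \to B_*$ uniformly as $t \to +\infty$, one has $B_\lambda(+\infty) = B_*$ for every $\lambda \in [0,1]$; the hyperbolicity of $B_*$ then immediately yields assumption (H1). The associated path
\[
A_\lambda^+ \= -J \dfrac{d}{dt} - B_\lambda(t), \qquad \mathscr D(A_\lambda^+) = \Set{u \in W^{1,2}([0,+\infty);\R^{2n}) | u(0) \in L},
\]
is a (gap) continuous curve in $\cfsa\big(L^2([0,+\infty);\R^{2n})\big)$ joining $A_0^+ = -J\frac{d}{dt} - B_*$ to $A_1^+ = -J\frac{d}{dt} - B(t)$.

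Applying the future halfclinic part of Theorem~\ref{thm:main1-intro} to this family gives
\[
\ispec(w_\lambda;\lambda\in[0,1]) = \igeo(w_1) - \igeo(w_0) + \iCLM\big(E^s_\lambda(+\infty), L_\lambda;\lambda\in[0,1]\big).
\]
Both Lagrangian paths in the correction term are $\lambda$-constant by construction, so that term vanishes by homotopy invariance of the Maslov index for pairs. At $\lambda = 0$ the system $z' = JB_* z$ is autonomous with hyperbolic generator, hence $\tau \mapsto E^s_0(\tau)$ is identically equal to $E^s_*(+\infty)$ on $[0,+\infty)$; consequently $\tau \mapsto (E^s_0(\tau), L)$ is constant and $\igeo(w_0) = 0$. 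Combined with $\igeo(w_1) = \igeo(w)$, this yields $\ispec(w_\lambda;\lambda\in[0,1]) = \igeo(w)$.

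Finally, I would translate the left-hand side of \eqref{eq:ll} into spectral flow: by Definition~\ref{def:rel-morse-index} and the standard equality between the spectral flow of an affine path of selfadjoint Fredholm perturbations with common essential spectrum and the relative Morse index of its endpoints, one has $-\spfl(A_\lambda^+;\lambda\in[0,1]) = \irel(A_0^+, A_1^+)$, which is precisely the claimed identity. The past halfclinic case is completely analogous, using instead the past halfclinic part of Theorem~\ref{thm:main1-intro} and the constancy of $E^u_0(\tau)$ for the autonomous model on $(-\infty,0]$. The only genuinely technical point, rather than bookkeeping, is verifying the gap continuity of $\lambda \mapsto A_\lambda^+$: this rests on (H1) and on the continuous dependence of the stable and unstable subspaces stated in \eqref{eq:convergence}, both of which are automatic for the specific homotopy chosen above.
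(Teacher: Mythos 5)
Your proposal is correct and takes essentially the same approach as the paper: the identical affine homotopy $B_\lambda(t)=(1-\lambda)B_*+\lambda B(t)$, the observation that $\igeo(w_0)=0$ because the frozen-coefficient stable space is $\tau$-constant, the vanishing of the asymptotic $\iCLM$ correction since it is $\lambda$-independent, and the identification of $\irel$ with minus the spectral flow of this affine path via Definition~\ref{def:rel-morse-index}. The paper merely states this argument in the homoclinic case and declares the halfclinic case ``completely analogous,'' so your write-up, which carries out the halfclinic case explicitly (including the check of (H1)), is in fact the more complete version of the same proof.
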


\subsection{A new index theory for bounded motions}

This subsection is devoted to provide a new spectral flow  formula for 
Hamiltonian 
systems defined on  bounded intervals.
Let $L, M \in \Lagr(n)$, $a,b \in \R$ and $H:[a,b]\times \R^{2n} \to \R$ be a 
$\mathscr C^2$-function 
and let $w$ be a solution of the Hamiltonian system 
\begin{equation}\label{eq:Hamiltonian-bounded-intro}
\begin{cases}
 w'(t)=J\nabla H\big(t, w(t)\big), \qquad t \in (a,b)\\
 w(a) \in L \ \textrm{ and } w(b) \in M.
\end{cases}
 \end{equation}
We denote by  $\gamma_{a}$ the (fundamental) matrix-valued solution 
$\gamma_{a}: [a,b] \to \Sp(2n,\R)$  of the Hamiltonian system given in 
Equation \eqref{eq:Hamiltonian-bounded-intro}
\begin{equation}\label{eq:cauchy-bounded-intro}
 \begin{cases}
   \gamma'_{a}(t)= S(t)\,\gamma_{a}(t), \qquad t 
\in [a,b]\\
  \gamma_{a}(a)=\Id.
 \end{cases}
\end{equation}
For any $ c \in (a,b)$ we consider the ordered pair
of Lagrangian paths pointwise defined by 
\[
\big( W(c),V(c)\big)
\]
where 
\begin{equation}\label{eq:WeV}
W(c)\=\gamma^{-1}_a(b-c)M
\textrm{ and } V(c)\=\gamma_{a}(c)L.
\end{equation}
Let $\alpha:[a,b] \to [c,b]$ be a positive affine reparametrisation of 
the interval $[c,b]$ (with the same  orientation) whilst 
the function $\beta:[a,b] \to [a,c]$ is  a negative affine reparametrisation of 
the interval  $[a,c]$ (with the opposite orientation). 
\begin{defn}\label{def:geometrical-index-bounded}
Under the previous notation, we define the {\em geometrical index\/} of the  
{\em bounded solution $w$\/} of the Hamiltonian system given in Equation 
\eqref{eq:Hamiltonian-bounded-intro} 
as follows
\[
  \igeob(w)\=\iCLM\Big(W\big(\beta(\tau)\big),V\big(\alpha(\tau)\big); \tau \in 
[a,b]\Big).
\]
\end{defn}
\begin{rem}
It is worth noticing that Definition \ref{def:geometrical-index-bounded} is new 
in the case of bounded 
orbits. In fact, it is standard (cfr. \cite{Arn86,RS95}, for instance) in this 
case to define the 
geometrical index of the solution $w$ of the Hamiltonian system 
given in Equation \eqref{eq:Hamiltonian-bounded-intro} as follows
\[
 \iCLM\big(M, \gamma_a(t)L; \tau \in [a,b]\big).
\]
However, we will prove in Lemma \ref{thm:classical-maslov-comparison} that these 
two integers coincide. In this 
way we are able to recover all the existing results. 
\end{rem}
Let now $H:[0,1] \times [a,b] \times \R^{2n}\to \R$ be a continuous map such 
that 
$H_\lambda\=H(\lambda, \cdot,\cdot): \R \times \R^{2n} \to \R$ is of class 
$\mathscr C^2$ for all $\lambda \in [0,1]$ 
and its derivatives depend continuously on $\lambda$. As before, for each 
$\lambda\in[0,1]$, let $\lambda \mapsto L_\lambda$  and $\lambda\mapsto 
M_\lambda$ be two paths of 
Lagrangian subspaces. We consider the one-parameter family of Hamiltonian 
systems 
\begin{equation}\label{eq:intro-nonlin-hamsys-intro-bd}
\begin{cases}
w'(t)=J\, \nabla H_\lambda\big(t,w(t)\big), \qquad   t \in  (a,b)\\
 w(a)\in L_\lambda \ \textrm{ and } \ w(b) \in M_\lambda
\end{cases}
 \end{equation}
and for each $\lambda \in [0,1]$, we denote by $w_\lambda$ the bounded solution 
of the Hamiltonian system given in Equation 
\eqref{eq:intro-nonlin-hamsys-intro-bd}
between $L_\lambda$ and $M_\lambda$. 
By linearising  Equation \eqref{eq:intro-nonlin-hamsys-intro-bd} along 
$w_\lambda$, we get the 
following linear Hamiltonian system 
\begin{equation}\label{eq:Ham-sys-bvp-het-hom-intro-bdd}
\begin{cases}
  z'(t)=J B_\lambda(t)\, z(t), \qquad t \in(a,b)\\
  z(a) \in L_\lambda \ \textrm{ and }  \ z(b) \in M_\lambda.
 \end{cases}
\end{equation}
We denote by $ W^{1,2}([a,b];L_\lambda, M_\lambda)$ the Sobolev space defined 
by 
\[
 W^{1,2}([a,b],L_\lambda, M_\lambda)\=
 \Set{z \in W^{1,2}([a,b];\R^{2n})| z(a)\in L_\lambda \textrm{ 
and } z(b) \in  M_\lambda}
\]
and for each $\lambda \in [0,1]$, we define  the operators
\begin{equation}\label{eq:operatori-bounded-intro}
 T_\lambda\= - J \frac{d}{dt} - B_\lambda(t):  W^{1,2}([a,b];L, M)
\subset L^2([a,b], \R^{2n})
 \longrightarrow L^2([a,b], \R^{2n}).
\end{equation}
It is well-known that for each $\lambda \in [0,1]$, 
$ T_\lambda$  is a closed unbounded 
selfadjoint Fredholm operator in $L^2([a,b], \R^{2n})$ having domain 
$  W^{1,2}([a,b];L_\lambda, M_\lambda)$  (cf. \cite{GGK90}, for instance) 
and hence it 
remains well-defined (gap continuous) path of closed selfadjoint Fredholm 
operators
\begin{equation}\label{eq:path-fredholm-2-intro}
 T:[0,1]\ni \lambda \longmapsto T_\lambda \in  \cfsa(L^2([a,b],\R^{2n})).
\end{equation}
We define $\ispec(T)\=-\spfl(T_\lambda;\lambda\in[0,1])$.
\begin{mainthm}\label{thm:main-2-intro}{\bf (Spectral flow formula for bounded 
orbits) \/}
Under the previous notation, we have
\begin{equation}\label{eq:forse-spicciamu}
 \ispec(T)=\igeob(w_1)-\igeob(w_0) + 
 \iCLM\big(M,L; [0,1]\big).
\end{equation}
\end{mainthm}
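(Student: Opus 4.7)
The plan is to adapt the stratum-homotopy strategy used for Theorem \ref{thm:main1-intro} (sketched in the remark that follows it) to the bounded setting. The goal is to connect the family $T_\lambda$ to the model first-order elliptic operator $D(L_\lambda, M_\lambda)\= -J\dfrac{d}{dt}$ with Lagrangian boundary conditions, and then extract the claimed decomposition from the homotopy invariance of the spectral flow on a rectangle in parameter space.

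Concretely, I would introduce the two-parameter family
$$T_\lambda^s \= -J \dfrac{d}{dt} - (1-s)B_\lambda(t), \qquad (\lambda, s) \in [0,1]\times [0,1],$$
acting on $W^{1,2}([a,b];L_\lambda, M_\lambda)$: at $s=0$ this is $T_\lambda$ and at $s=1$ it is $D(L_\lambda, M_\lambda)$. After verifying that this defines a gap-continuous family of selfadjoint Fredholm operators — possibly after trivialising the $\lambda$-dependent boundary conditions via a symplectic change of coordinates so that the domain is common — homotopy invariance of the spectral flow applied along $\partial\big([0,1]\times[0,1]\big)$ gives the decomposition
$$\spfl(T_\lambda; \lambda \in [0,1]) = \spfl(D(L_\lambda, M_\lambda); \lambda \in [0,1]) + \spfl(T_0^s; s \in [0,1]) - \spfl(T_1^s; s \in [0,1]).$$

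The three pieces on the right are then identified as follows. For the two vertical sides I would show
$$\spfl(T_\lambda^s; s\in[0,1]) = \igeob(w_\lambda), \qquad \lambda\in\{0,1\}.$$
This is the heart of the argument: the crossings of the deformation $s\mapsto T_\lambda^s$ correspond, via the Hamiltonian flow $\gamma_{a,\lambda}$, to nontrivial intersections of the evolved Lagrangian pair $\big(W_\lambda(\beta(\tau)),V_\lambda(\alpha(\tau))\big)$ appearing in Definition \ref{def:geometrical-index-bounded}; the reparametrisations $\alpha,\beta$ with opposite orientations are precisely what is needed for the crossing forms to appear with the signs dictated by the CLM Maslov index. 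For the top edge, an application of the Cappell--Lee--Miller formula (\cite[Theorem 0.4]{CLM94}) to the trivial symbol $-J\dfrac{d}{dt}$ with varying Lagrangian boundary conditions yields
$$\spfl(D(L_\lambda, M_\lambda); \lambda\in[0,1]) = -\iCLM(M_\lambda, L_\lambda; \lambda\in[0,1]).$$
Combining these identities and recalling that $\ispec(T)\=-\spfl(T_\lambda;\lambda\in[0,1])$ produces the desired formula \eqref{eq:forse-spicciamu}.

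The main obstacle will be the crossing-form computation identifying the vertical spectral flows with $\igeob(w_\lambda)$ with the correct sign; this is where the specific affine parametrisation by $(\alpha,\beta)$ enters and must be reconciled term by term, and where the reader really sees \emph{why} Definition \ref{def:geometrical-index-bounded} is the right one (as remarked by the authors, its equivalence with the classical Maslov-type definition must be established in Lemma \ref{thm:classical-maslov-comparison}). A secondary technical point is the gap-continuity of $T_\lambda^s$ uniformly in both parameters together with the persistence of Fredholmness under the two-parameter deformation — routine but requiring care since both $L_\lambda$ and $M_\lambda$ vary.
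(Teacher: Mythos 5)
Your route is genuinely different from the paper's. The paper does \emph{not} set up a two-parameter family of operators that contracts the potential $B_\lambda$; instead it reproduces, step by step, the strategy used for Theorem 1. Arguing as in Proposition \ref{thm:prop-prima-uguaglianza}, it shrinks the time interval to the interior point $c$ (after a rescaling change of variables) and thereby stratum-homotopes $T_\lambda$ to the constant-symbol operator $D_\lambda=-J\,d/dt$ on the domain $W^{1,2}\bigl([a,b];V_\lambda(c),W_\lambda(c)\bigr)$ --- note that the boundary Lagrangians are the \emph{evolved} pair $\bigl(V_\lambda(c),W_\lambda(c)\bigr)$, not $(L_\lambda,M_\lambda)$. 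A single application of \cite[Theorem 0.4]{CLM94} then gives $\ispec(T)=-\iCLM\bigl(W_\lambda(c),V_\lambda(c);\lambda\in[0,1]\bigr)$, and the desired decomposition is extracted by running the rectangle/path-additivity argument of Proposition \ref{thm:additivity} \emph{at the level of the CLM index}, in the $(\tau,\lambda)$-plane of Lagrangian pairs $\bigl(W_\lambda(\beta(\tau)),V_\lambda(\alpha(\tau))\bigr)$. In that picture the two geometrical indices $\igeob(w_0),\igeob(w_1)$ appear \emph{by definition} as the $\lambda=0$ and $\lambda=1$ edges (this is exactly why Definition \ref{def:geometrical-index-bounded} is set up with the affine reparametrisations $\alpha,\beta$), and the remaining edge yields the $\iCLM(M_\lambda,L_\lambda)$ term after an application of Lemma \ref{thm:classical-maslov-comparison}.

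Your decomposition instead happens at the level of the spectral flow: you run the rectangle argument on the operator family $T_\lambda^s$ and are left with the task of identifying the two vertical edges with $\igeob(w_0)$ and $\igeob(w_1)$. This is the real gap in the proposal, and it is more substantial than you indicate. The crossings of $s\mapsto T_\lambda^s$ live on the Lagrangian path $s\mapsto\Phi_{\lambda,s}(b)L_\lambda$, where $\Phi_{\lambda,s}$ is the flow of the rescaled system $z'=(1-s)JB_\lambda(t)z$; this is \emph{not} the path $\tau\mapsto\bigl(W_\lambda(\beta(\tau)),V_\lambda(\alpha(\tau))\bigr)$ that defines $\igeob$, nor the classical path $t\mapsto\bigl(M_\lambda,\gamma_{(a,\lambda)}(t)L_\lambda\bigr)$. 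To close this step you would need a second rectangle argument, now in the $(s,t)$-plane, to match the ``contract the potential'' path with the ``evolve the boundary condition'' path (they share endpoints but run with opposite orientations, so the signs must be tracked through both the Reversal property of $\iCLM$ and the normalisation of \cite[Theorem 0.4]{CLM94}, which is stated for the $\varepsilon$-spectral flow rather than the spectral flow itself). So while your scheme is a legitimate alternative --- and arguably more transparent about where the three terms come from --- it front-loads exactly the comparison that the paper avoids by putting the rectangle in Lagrangian-pair space and letting Definition \ref{def:geometrical-index-bounded} do the bookkeeping.
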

In the special case in which the Lagrangian boundary conditions are independent 
on $\lambda$ as direct consequence 
of the nullity property of the $\iCLM$ index (cf. Section 
\ref{sec:spectral-flow-and-Maslov}) the third term in the 
(RHS) of Equation \eqref{eq:forse-spicciamu}, vanishes identically.
\begin{cor}\label{thm:mainbounded-intro}
 In the assumption of Theorem \ref{thm:main-2-intro}, if $(L_\lambda, 
M_\lambda) 
\equiv (L,M) \in \Lagr(n)\times \Lagr(n)$, 
 then we have
\begin{equation}
 \ispec(T)=\igeob(w_1)-\igeob(w_0).
\end{equation}
\end{cor}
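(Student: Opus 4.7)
The plan is straightforward: Corollary \ref{thm:mainbounded-intro} is an immediate specialization of Theorem \ref{thm:main-2-intro} once one observes that the third term on the right-hand side of Equation \eqref{eq:forse-spicciamu} vanishes under the constant-boundary-conditions hypothesis. I would therefore first invoke Theorem \ref{thm:main-2-intro} verbatim, so that
\[
\ispec(T) = \igeob(w_1) - \igeob(w_0) + \iCLM\big(M_\lambda, L_\lambda;\lambda \in [0,1]\big),
\]
and then show that $\iCLM(M_\lambda, L_\lambda; [0,1]) = 0$ under the assumption $(L_\lambda, M_\lambda) \equiv (L,M)$.

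The key step is the vanishing of the Maslov index of a pair of constant Lagrangian paths, which is precisely the nullity (or stationarity) property of the $\iCLM$ index recalled in Section \ref{sec:spectral-flow-and-Maslov}. Indeed, the Cappell--Lee--Miller Maslov index $\iCLM(\ell_1, \ell_2; [0,1])$ algebraically counts the non-trivial intersections of $\ell_1$ and $\ell_2$ as the parameter varies; when both paths are constant, no intersection ever changes, so the crossing form is identically zero on every partition, and the index vanishes. This is the only structural fact one needs beyond Theorem \ref{thm:main-2-intro}.

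There is essentially no obstacle here; the subtlety, if any, lies in making sure one quotes the nullity property in exactly the form in which $\iCLM$ was defined in Section \ref{sec:spectral-flow-and-Maslov} (namely for ordered pairs, rather than relative to a Maslov cycle of a fixed Lagrangian), but since the paper adopts the pair formulation throughout, this is immediate. Substituting $\iCLM(M,L;[0,1]) = 0$ into the spectral flow formula yields
\[
\ispec(T) = \igeob(w_1) - \igeob(w_0),
\]
which completes the proof.
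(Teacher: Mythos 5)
Your proposal is correct and takes essentially the same route as the paper: the paper also observes, immediately before stating the corollary, that when $(L_\lambda,M_\lambda)\equiv(L,M)$ the third term $\iCLM(M,L;[0,1])$ in Equation \eqref{eq:forse-spicciamu} vanishes by the nullity property of the $\iCLM$-index, and the corollary follows at once from Theorem \ref{thm:main-2-intro}.
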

\begin{rem}
We conclude the overview of the main results of this paper by stressing the 
fact that, with this approach, we are able to get  {\em at once\/} the spectral 
flow formulas   for solutions parametrised by a half-line, on the whole real 
line or  
by a  bounded interval, by establishing in the last case a  
precise and net way to relate this new definition to the classical one. 
It is worth mentioning that the definition of the Maslov index given in this 
paper, is very flexible and it agrees also in the unbounded case, with the 
definition given by authors in 
\cite[Definition 1, pag. 592]{CH07} and in \cite{HO16, BHPT17}.
A big effort for the 
readability of the paper was made  by trying on the one-hand to shorten as much 
as possible 
all   proofs by avoiding standard and clumsy details; on the other hand 
we attempt  to  be clear and to add precise  references to the existing 
literature as
much as possible. The paper is structured as 
follows. In Section 
\ref{sec:spectral-flow-and-Maslov} we quickly recap the basic definitions and 
properties of the spectral 
flow and of the Maslov index. In Section \ref{sec:spectral-flow-formula} which 
is the core of the paper, we prove a general spectral flow formula for a 
one-parameter family of Hamiltonian 
systems. As direct application we prove a new 
index theorems for homoclinic and half-clinic motions and we finally 
recover all the existing results in the case of orbits parametrised on bounded 
intervals.
\end{rem}

\section{Maslov index and Spectral Flow}\label{sec:spectral-flow-and-Maslov}

This Section is devoted to recall the basic definitions and properties of the 
{\em Maslov index for pairs of Lagrangian subspaces\/} and of the {\em spectral 
flow for paths of closed 
selfadjoint Fredholm operators.\/}. Our basic references for the material 
contained in this section are 
\cite{CLM94, ZL99} and \cite{BLP05} and references therein.

\subsection{The Maslov index for pairs of Lagrangian paths}
Let  $(\R^{2n},\omega)$ be the standard symplectic space where $\omega$ is the 
(standard) symplectic form 
given by 
\begin{equation}
 \omega(u,v) \= \langle J u, v\rangle \quad \textrm{ for }\quad  J\= 
\begin{pmatrix} 
                                                            0&-I_n\\
                                                            I_n &0
                                                           \end{pmatrix}
\end{equation}
where $I_n$ denotes the identity matrix. We denote by 
$\Lagr(n)$  the set of all Lagrangian subspaces of $(\R^{2n},\omega)$
and we  refer to as the {\em Lagrangian Grassmannian of 
$(\R^{2n},\omega)$.\/}  It is 
well-known that the Lagrangian Grassmannian  is a real compact and connected 
analytic  $\frac12 n(n+1)$-dimensional 
submanifold of the Grassmannian manifold of all $n$-dimensional subspaces in 
$\R^{2n}$.
For $a, b \in \R$ with $a<b$, we denote by $\P([a,b];\R^{2n})$  the 
space of all ordered pairs of  continuous  maps of Lagrangian  subspaces
\[
 L: [a,b] \ni t \mapsto L(t)\= \big(L_1(t), L_2(t)\big) \in  \Lagr(n) \times 
\Lagr(n)
\]
equipped with the compact-open topology. Following authors in \cite{CLM94} we 
recall the definition of the  
{\em  Maslov index   for pairs of  Lagrangian subspaces,\/} that will be denoted 
by 
$\iCLM$. (In the notation, CLM stands for Cappell, Lee and Miller).
Loosely  speaking, given the pair $L=(L_1, L_2) \in \P([a,b];\R^{2n})$, this 
index 
counts with signs and multiplicities the number of instants $t \in [a,b]$ that 
$L_1(t) \cap L_2(t) \neq \{0\}$.  
\begin{defn}\label{def:clm-index}
The {\em CLM-index\/}  is the unique integer valued 
function 
\[
 \iCLM:\P([a,b];\R^{2n})\ni L \longmapsto \iCLM(L;[a,b]) \in \Z
\]
satisfying the Properties I-VI given in \cite[Section 1]{CLM94}. 
\end{defn}
\begin{rem}
Authors in \cite{CLM94} defined their index
in any (finite-dimensional) real or complex symplectic vector space. A different 
approach can 
be 
conceived by using the charts of the differential atlas of $\Lagr(n)$ and the 
fundamental groupoid along the  lines given by authors in \cite{GPP04}.

It is worth mentioning that there is a very efficient way to compute $\iCLM$ 
through 
the so-called {\em crossing forms\/} as shown by authors in \cite{RS93, ZL99}. 
We also observe that 
in the special case in which one Lagrangian path of the pair is constant, the 
$\iCLM$-index is closely 
related to the {\em (relative) Maslov index\/} $\iRS$, defined by authors in 
the 
celebrated paper \cite{RS93}. As proved in \cite[Theorem 3.1]{ZL99}, if 
$L=(L_1,L_2) \in \P([a,b];\R^{2n})$, 
then the following relation holds
\[
\iCLM(L_1, L_2;[a,b])=\iRS(L_2, L_1;[a,b]) - \dfrac12 
\big(h_{12}(b)-h_{12}(a)\big)
\]
where $h_{12}(t)\= \dim L_1(t)\cap L_2(t)$. 
(Cf. \cite[Theorem 3.1]{ZL99} for further details).
\end{rem}

\paragraph{Properties of the CLM-index.}
For the sake of the reader we list some  properties of the $\iCLM$ index that 
we shall frequently use along the 
paper.
\begin{itemize}
\item{\bf (Stratum homotopy relative to the ends)\/} Given a continuous map 
\[
L:[0,1]\ni s \mapsto L(s) \in  \P([a,b];\R^{2n}) \textrm{ where } 
L(s)(t)\=\big(L_1(s,t), L_2(s,t)\big)
\]
such that $\dim\big(L_1(s,a) \cap L_2(s,a)\big)$ and $\dim\big(L_1(s,b) \cap 
L_2(s,b)\big)$ are 
both constant, then 
\[
\iCLM\big(L(0);[a,b]\big)= \iCLM\big(L(1);[a,b]\big).
\]
\item{\bf (Path additivity)\/} Let $a,b,c \in R$ with $a<b<c$. If $L\=(L_1, 
L_2) \in \P([a,c];\R^{2n})$, then 
\[
\iCLM(L;[a,c])= \iCLM(L;[a,b])+\iCLM(L;[b,c]).
\]
\item{\bf (Nullity)\/} Given $L=(L_1,L_2) \in \P([a,b];\R^{2n})$ such that 
$\dim\big(L_1(t\cap L_2(t)\big)$ is 
independent on $t$ and $L_1(t) \cap L_2(t)$ varying continuously, then
\[
\iCLM(L;[a,b])=0.
\]
\item{\bf (Reversal)\/} Let  $L=(L_1,L_2) \in \P([a,b];\R^{2n})$. Denote the 
same path travelled in the reverse 
direction in  $\P([-b,-a];\R^{2n})$ by  $\hat L(s)=\big(L_1(-s), L_2(-s)\big)$. 
Then 
\[
\iCLM(\hat L;[-b, -a])= -\iCLM(L;[a,b]).
\]
\end{itemize}

\subsection{The spectral flow for paths of closed selfadjoint Fredholm 
operators}

Given the separable real Hilbert space 
$V$, we denote by  $\mathcal C^{sa}(V)$ 
the set of all (closed) densely  defined and selfadjoint operators $T : 
\mathcal D(T) \subset V \to 
V$ and by $ \cfsa(V)$ the space of all closed selfadjoint and Fredholm 
operators equipped 
with the {\em graph distance topology\/} (or {\em gap topology\/}), namely the 
topology induced by the {\em gap 
metric\/} $
 d_G(T_1, T_2)\=\norm{P_1-P_2}_{\Lin(V)}$
where $P_i$ is the projection onto the graph of $T_i$ in the product space $V 
\times V$ and $\mathscr L(V)$ denotes the Banach space of all 
bounded and linear operators. 
Let $T \in\cfsa(V)$ and for $a,b \notin 
\sigma(T)$, we set
\[
P_{[a,b]}(T)\=\Re\left(\dfrac{1}{2\pi\, i}\int_\gamma 
(\lambda-T^\C)^{-1} d\, \lambda\right)
\]
where $\gamma$ is the circle of radius $\frac{b-a}{2}$ around the point 
$\frac{a+b}{2}$. We recall that if 
$[a,b]\subset \sigma(T)$ consists of  isolated eigenvalues of finite type then 
\begin{equation}\label{eq:range-proiettor}
 \im  P_{[a,b]}(T)= E_{[a,b]}(T)\= \bigoplus_{\lambda \in [a,b]}\ker 
(\lambda -T);
\end{equation}
(cf. \cite[Section XV.2]{GGK90}, for instance) and $0$ either belongs to the 
resolvent set of $T$ or it is an isolated eigenvalue of finite multiplicity.  
The 
next result allow us to  define the spectral flow for continuous paths in 
$\cfsa(V)$.
\begin{prop}\label{thm:cor2.3}
 Let $T_0 \in \cfsa(V)$ be fixed.
 There exists a positive real number $a \notin \sigma(T_0)$ and an 
open neighborhood $\mathscr N \subset  \cfsa(V)$ of $T_0$ in the gap topology 
such that $\pm a \notin 
\sigma(T)$ for all $T \in  \mathscr N$ and the map 
 \[
  \mathscr N \ni T \longmapsto P_{[-a,a]}(T) \in \Lin(V)
 \]
is continuous and the projection $ P_{[-a,a]}(T)$ has constant finite 
rank for all $t \in \mathscr N$.
\end{prop}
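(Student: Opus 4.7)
\medskip

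\noindent \textbf{Proof proposal.} The plan is to exploit the fact that $T_0$ is a closed selfadjoint Fredholm operator, so $0$ is either in the resolvent set $\rho(T_0)$ or is an isolated eigenvalue of finite multiplicity; in either case $\sigma(T_0)\cap[-a,a]\subseteq\{0\}$ for all sufficiently small $a>0$, and we may further arrange that $\pm a\notin\sigma(T_0)$. Fix such an $a$. The strategy is then classical: show that the Riesz spectral projection onto $E_{[-a,a]}(T_0)$ depends continuously on $T$ in the gap topology, and then use the fact that the rank of a norm-continuous family of projections is locally constant.

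First I would establish the stability of the resolvent set at the two points $\pm a$. The key technical input here is that for selfadjoint operators the gap topology coincides with the Riesz topology: writing $R(T)\=T(I+T^2)^{-1/2}\in\Lin(V)$, the gap distance is equivalent to the norm distance between the bounded transforms, and the spectrum of $R(T)$ sits in $[-1,1]$ with $R(T)$ related to $T$ by a homeomorphism of spectra. Using this, if $\mu=\pm a\in\rho(T_0)$, then for every $T$ in a sufficiently small gap neighborhood $\mathscr N_0$ of $T_0$, we also have $\mu\in\rho(T)$ and the resolvent $(\mu-T)^{-1}$ depends continuously on $T$ in operator norm. (For this basic fact I would cite \cite{BLP05} or the standard references therein.)

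Next, the same continuity statement holds uniformly for $\lambda$ on the compact circle $\gamma$ of radius $a/2$ around the origin (in the complexification $V^{\C}$), because $\gamma$ lies entirely in the resolvent set of each $T\in \mathscr N_0$ after possibly shrinking the neighborhood: indeed selfadjoint operators have real spectrum, so the only interaction between $\sigma(T)$ and $\gamma$ can occur at the two real points $\pm a$, which we have already controlled. Feeding this uniform continuity of $\lambda\mapsto(\lambda-T^{\C})^{-1}$ into the Riesz integral formula for $P_{[-a,a]}(T)$ immediately yields norm-continuity of $T\mapsto P_{[-a,a]}(T)$ on $\mathscr N_0$.

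Finally, by \eqref{eq:range-proiettor} applied to $T_0$, the projection $P_{[-a,a]}(T_0)$ has finite rank equal to $\dim\ker T_0$ (which is finite because $T_0$ is Fredholm). Since norm-continuous paths of projections have locally constant rank — this follows from the standard fact that two projections at gap distance less than $1$ are similar — there is an open neighborhood $\mathscr N\subseteq\mathscr N_0$ of $T_0$ on which the rank of $P_{[-a,a]}(T)$ is constant. The main obstacle in this argument is the passage from the (possibly subtle) gap topology on unbounded operators to the norm continuity of bounded spectral data; this is settled cleanly by the Riesz transform identification, after which everything reduces to holomorphic functional calculus for bounded selfadjoint operators.
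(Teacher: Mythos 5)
Your approach is sound and is in fact the standard one: it is essentially the argument carried out in Booss-Bavnbek, Lesch and Phillips \cite[Proposition 2.10 and the surrounding material]{BLP05}, which the paper here simply cites rather than reproves. The overall scheme (choose $a$ so that $\sigma(T_0)\cap[-a,a]\subseteq\{0\}$ and $\pm a\notin\sigma(T_0)$ using the Fredholm property, stabilize the resolvent at $\pm a$ and along the contour, pass to the Riesz integral, and finish with local constancy of rank for norm-continuous families of finite-rank projections) is exactly what is needed, and your concluding steps are correct.

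There is, however, a genuine wrinkle in the key technical bridge. You assert that ``for selfadjoint operators the gap topology coincides with the Riesz topology,'' i.e.\ that the gap metric is equivalent to the norm distance between the bounded transforms $R(T)=T(I+T^2)^{-1/2}$. This is \emph{false} in general on $\cfsa(V)$: the Riesz topology is strictly finer than the gap topology (see the discussion and counterexample in \cite{BLP05}; this subtlety is precisely one of the reasons that paper is careful about which topology it works with). Consequently, the chain ``gap convergence $\Rightarrow$ bounded-transform convergence $\Rightarrow$ resolvent convergence'' has a broken first link. What \emph{is} true, and is all you need, is the resolvent characterization of the gap metric: on selfadjoint operators the gap distance is uniformly equivalent to $\|(T_1\pm i)^{-1}-(T_2\pm i)^{-1}\|$. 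From gap continuity at $T_0$ one then gets, via the resolvent identity and a Neumann-series perturbation argument, that for each fixed $\mu\in\rho(T_0)$ one has $\mu\in\rho(T)$ and $(\mu-T)^{-1}\to(\mu-T_0)^{-1}$ in norm for $T$ in a small gap neighborhood, and this is uniform on the compact contour $\gamma$ (which, incidentally, the paper takes to be the circle of radius $a$ about $0$, not radius $a/2$ — immaterial to the argument). Substituting this into the Riesz integral gives the needed norm continuity of $T\mapsto P_{[-a,a]}(T)$, and the rest of your proof then goes through. So the proposal is correct in strategy and repairable, but the gap-equals-Riesz claim should be replaced by the resolvent characterization of the gap topology.
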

\begin{proof}
For the proof of this result we refer the interested reader to 
\cite[Proposition 2.10]{BLP05}.
\end{proof}
Let now $\mathcal A:[a,b] \to \cfsa(V)$ be a  continuous path.  As direct 
consequence 
of Proposition \ref{thm:cor2.3}, for every $t \in [a,b]$ there exists $a>0$ and 
an open 
connected neighbourhood $\mathscr N_{t,a} \subset \cfsa(V)$ of $\mathcal 
A(t)$ 
such that $\pm a \notin \sigma(T)$ for all $T \in \mathscr N_{t,a}$, the map 
$
 \mathscr N_{t,a} \in T \longmapsto P_{[-a,a]}(T) \in \Lin(V)$
is continuous and hence the rank of $\mathcal P_{[-a,a]}(T)$ does not 
depends on $T \in \mathscr N_{t,a}$.
Now let us consider the open covering of the interval $I$ given by the 
pre-images of the neighbourhoods $\mathcal 
N_{t,a}$ through $\mathcal A$ and by choosing a sufficiently fine partition of 
the interval $[a,b]$ having diameter less than the Lebesgue number 
of the covering, we can find  $a=:t_0 < t_1 < \dots < t_n:=b$, 
operators $T_i \in \cfsa(V)$ and  
positive real numbers $a_i $, $i=1, \dots , n$ in such a way the restriction of 
the path $\mathcal A$ on the 
interval $[t_{i-1}, t_i]$ lies in the neighborhood $\mathscr N_{t_i, a_i}$ and 
hence the 
$\dim E_{[-a_i, a_i](\mathcal A_t)}$ is constant for $t \in [t_{i-1},t_i]$, 
$i=1, \dots,n$.
\begin{defn}\label{def:spectral-flow-unb}
The \emph{spectral flow of $\mathcal A$} on the interval $[a,b]$ is defined by
\[
 \spfl(\mathcal A; [a,b])\=\sum_{i=1}^N \dim\,E_{[0,a_i]}(\mathcal A_{t_i})-
 \dim\,E_{[0,a_i]}(\mathcal A_{t_{i-1}}) \in \Z.
\]
\end{defn}
\begin{rem}
The spectral flow as given in Definition \ref{def:spectral-flow-unb} is 
well-defined 
(in the sense that it is independent either on the partition or on the $a_i$) 
and only depends on 
the continuous path $\mathcal A$. (Cf. \cite[Proposition 2.13]{BLP05} and 
references therein). (We also refer  the interested reader to \cite{Wat15} for 
the analogous properties in the case of closed selfadjoint Fredholm operators 
on 
fixed domains). 
\end{rem}
\paragraph{Properties of the Spectral Flow.}
For the sake of the reader we list some  properties of the spectral flow  that 
we shall frequently use in the paper.
\begin{itemize}
 \item {\bf (Stratum homotopy relative to the ends)\/} Given a continuous map 
 \[
  \overline A: [0,1]\to \mathscr C^0\big([a,b];\cfsa(V)\big)\textrm{ where } 
\overline A(s)(t)\=\overline A^s(t)
\]  
such that $\dim \ker \overline A^s(a)$ and  $\dim \ker \overline A^s(b)$ are 
both independent on $s$, then 
   \[
    \spfl(\overline A^0_t; t \in [a,b])=\spfl(\overline A^1_t; t \in [a,b]).
   \]
    \item  {\bf (Path additivity)\/} If $A^1,
A^2\in \mathscr C^0\big([a,b];\cfsa(V)\big) $ are  such that 
$ A^1(b)= A^2(a)$, then
 \[
  \spfl( A^1_t * A^2_t; t \in [a,b]) = \spfl( A^1_t; t \in [a,b])+\spfl( A^2_t; 
t \in [a,b]).
 \]
   \item  {\bf (Nullity)\/} If $ A\in \mathscr C^0\big([a,b];\GL(V)\big)$, 
   then $\spfl( A_t; t \in [a,b])=0$;
   \item{\bf (Reversal)\/}  Denote the 
same path travelled in the reverse 
direction in  $\cfsa(V)$ by  $\widehat A(t)=A(-t)$. 
Then 
\[
\spfl( A_t; t \in [a,b])=-\spfl(\widehat A _t; t \in [-b,-a]).
\]
\end{itemize}
There is an efficient way to compute the spectral flow, through what are called 
{\em crossing 
forms\/}. Let us consider a $\mathscr C^1$-path, which always exists in the 
homotopy class 
(relative to the ends) and let $P_t$ be the orthogonal projector from $V$ to 
$E_0\big(\mathcal A_t\big)$, the kernel of $\mathcal A_t$. When 
$E_0\big(\mathcal A_{t_0}\big)\neq \{0\}$ we term 
the instant $t_0$ a crossing instant. In this case we defined the {\em crossing 
operator\/} 
$\Gamma(\mathcal A,t_0)$ as 
\[
\Gamma(\mathcal A,t_0)\= P_{t_0} \dfrac{\partial}{\partial t}P_{t_0}: 
E_0\big(\mathcal A_{t_0}\big)\to   
E_0\big(\mathcal A_{t_0}\big).
\]
We term the crossing instant $t_0$ {\em regular\/} if the crossing operator 
$\Gamma(\mathcal A,t_0)$ is 
non-degenerate. In this case we define the {\em signature\/} simply as 
\[
 \sgn\big(\Gamma(\mathcal A,t_0)\big)\= \dim E_+\big(\Gamma(\mathcal 
A,t_0)\big)- 
 \dim E_-\big(\Gamma(\mathcal A,t_0)\big),
\]
where $ E_+\big(\Gamma(\mathcal A,t_0)\big)$ (resp. $ E_-\big(\Gamma(\mathcal 
A,t_0)\big)$) denote the positive 
(resp. negative) spectral space of the operator $\Gamma(\mathcal A,t_0)$. We 
assume that all the 
crossings are regular. Then the crossing instants are isolated (and hence on a 
compact interval  are 
in a finite number) and the spectral flow is given by the following formula 
\begin{equation}\label{eq:formula-sf-crossings}
 \spfl(\mathcal A_t; t \in [a,b]) = \sum_{t_0 \in \mathcal S_*} 
\sgn\big(\Gamma(\mathcal A, t_0)\big)- 
 \dim E_-\big(\Gamma(\mathcal A,a)\big)
+ \dim E_+\big(\Gamma(\mathcal A,b)\big)
\end{equation}
where $\mathcal S_*\= \mathcal S\cap (a,b)$ and $\mathcal S$ denotes the set of 
all crossings. 
\begin{rem}
 One can prove that there exists $\varepsilon_0>0$ sufficiently small such that 
 $\spfl(\mathcal A_t; t \in [a,b])= \spfl(\mathcal A_t+ \varepsilon \Id; t \in 
[a,b])$ (where $\Id$ denotes the 
 identity operator on $V$) for 
 $\varepsilon \in [0, \varepsilon_0]$; furthermore for almost every such 
$\varepsilon$ the 
 path $t \mapsto \mathcal A_t+ \varepsilon \Id$ has regular crossings. We refer 
the interested reader to 
 \cite{CH07, HS09, Wat15} and references therein. 
\end{rem}
Given the  continuous path $ A: [a,b]\to \cfsa(V)$,
we denote by $ A^\varepsilon$  the path $ A^\varepsilon: [a,b]\to \cfsa(V)$ 
pointwise 
defined by 
\[
 A^\varepsilon(t)\=  A_t -\varepsilon \Id, \qquad t \in [a,b].
\]
\begin{lem}\label{thm:differnza-spfl}
There exists  $\varepsilon>0$ sufficiently small, such that 
\[
\spfl( A_t; t\in [a,b])= \spfl( A^{ \varepsilon}_t;t \in [a,b])-  
\dim \ker A_a +\dim \ker A_b.
\]
vanishes 
\end{lem}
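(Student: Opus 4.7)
The plan is to reduce to a $\mathcal C^1$ representative of the path with regular crossings and then apply the crossing form formula \eqref{eq:formula-sf-crossings} to both $A$ and $A^\varepsilon$ separately. First, I would invoke the homotopy invariance of the spectral flow (rel.\ endpoints, which here costs nothing since both endpoints are held fixed) together with the density of $\mathcal C^1$ paths with only regular crossings to assume that $A$ is $\mathcal C^1$ and that every $t_0 \in \mathcal S \subseteq [a,b]$ with $\ker A_{t_0} \neq \{0\}$ is a regular crossing. Write $k_\pm(t_0) \coloneqq \dim E_\pm\big(\Gamma(A,t_0)\big)$ so that $k_+(t_0)+k_-(t_0) = \dim \ker A_{t_0}$.

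Next, I would fix $\varepsilon > 0$ small enough that $\varepsilon \notin \sigma(A_a) \cup \sigma(A_b)$, so that $A^\varepsilon_a$ and $A^\varepsilon_b$ are invertible, and small enough that the $\mathcal C^1$ eigenvalue branches of $A_t$ through each interior crossing $t_0 \in \mathcal S \cap (a,b)$ perturb, via the implicit function theorem, to eigenvalue branches of $A^\varepsilon_t$ passing through $0$ at a nearby time $t_0^\varepsilon \in (a,b)$ with the same sign of derivative (hence the same crossing signature). No other interior crossings of $A^\varepsilon$ can appear, thanks to a uniform lower bound on the moduli of the non-zero eigenvalues of $A_t$ on compact subintervals where $A_t$ is invertible.

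The delicate piece is the local analysis near the endpoints. The $\dim \ker A_a$ eigenvalue branches passing through $0$ at $t=a$ start at height $-\varepsilon$ for $A^\varepsilon$: those in $E_+\big(\Gamma(A,a)\big)$ cross $0$ at some $t \in (a, a+O(\varepsilon))$, contributing $+k_+(a)$ to $\spfl(A^\varepsilon)$, while those in $E_-\big(\Gamma(A,a)\big)$ stay negative and contribute nothing. Symmetrically, near $t=b$ the branches in $E_-\big(\Gamma(A,b)\big)$ cross $0$ just before $b$ and contribute $-k_-(b)$, whereas those in $E_+\big(\Gamma(A,b)\big)$ have not yet crossed. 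Plugging these into \eqref{eq:formula-sf-crossings} (noting $A^\varepsilon$ has no endpoint crossings in the sense of that formula) gives
\[
\spfl(A^\varepsilon) = \sum_{t_0 \in \mathcal S_*} \sgn \Gamma(A,t_0) + k_+(a) - k_-(b),
\qquad
\spfl(A) = \sum_{t_0 \in \mathcal S_*} \sgn \Gamma(A,t_0) - k_-(a) + k_+(b),
\]
and subtracting yields $\spfl(A^\varepsilon) - \spfl(A) = \dim \ker A_a - \dim \ker A_b$, which rearranges to the identity in the lemma.

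I expect the main obstacle to be a clean bookkeeping of the endpoint analysis: checking that for sufficiently small $\varepsilon$ the new crossings of $A^\varepsilon$ lie only in small one-sided neighbourhoods of $a$ and $b$, that no spurious crossings are introduced elsewhere, and that the signs match the convention encoded in \eqref{eq:formula-sf-crossings}. This is a standard but slightly tedious perturbation-of-eigenvalues argument, and it is the one step in which the quantitative smallness of $\varepsilon$ really enters.
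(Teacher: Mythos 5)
Your argument is correct, but it takes a genuinely different route from the paper. The paper's proof is a short homotopy argument: it defines $\widetilde{A^\varepsilon}(t,s)\coloneqq A_t - s\varepsilon\Id$ on the rectangle $\mathcal R = [a,b]\times[0,1]$, notes that the spectral flow around $\partial\mathcal R$ vanishes by contractibility, and then uses path additivity to split
\[
\spfl(A_t;t\in[a,b]) = \spfl(A_t^\varepsilon;t\in[a,b]) + \spfl(A_a - s\varepsilon\Id;s\in[0,1]) - \spfl(A_b - s\varepsilon\Id;s\in[0,1]),
\]
after which the two vertical sides are computed directly (the only possible crossing on each is at $s=0$, contributing $-\dim\ker A_a$ and $-\dim\ker A_b$ respectively, by the convention built into Definition \ref{def:spectral-flow-unb}). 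Your proof instead invokes the crossing form formula \eqref{eq:formula-sf-crossings} for both $A$ and $A^\varepsilon$ after reducing to a $\mathcal C^1$ representative with regular crossings, and then matches eigenvalue branches of $A^\varepsilon$ against those of $A$ near the endpoints. The endpoint bookkeeping you carry out is exactly right and the signs do rearrange to the stated identity. What the paper's route buys is brevity and no need to perturb to a $\mathcal C^1$ path with regular crossings — it works at the level of the gap-continuous path directly, with the whole content delegated to the homotopy and additivity axioms; what your route buys is an explicit eigenvalue picture, which makes visible exactly how the kernel at each endpoint gets redistributed into genuine crossings of $A^\varepsilon$ (and is arguably closer in spirit to why the statement is true). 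The one place in your write-up that could be tightened is the reduction step: when you pass to a $\mathcal C^1$ path with regular crossings rel endpoints, you should note explicitly that the same homotopy, shifted by $-\varepsilon\Id$, simultaneously deforms $A^\varepsilon$ rel its endpoints, so that both sides of the identity are computed against consistent representatives — otherwise the two applications of \eqref{eq:formula-sf-crossings} would not be comparing like with like.
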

\begin{proof}
We start by observing that, since $ A_a,  A_b\in \cfsa(V)$, then $0$ belongs 
to 
the resolvent set of $ A_a$ and $  A_b$ or it is an isolated eigenvalue of 
finite multiplicity. 
Thus we can choose $\varepsilon >0$ smaller than  every non-zero eigenvalue of 
$ 
A_a$ and $ A_b$ and we define the family
\begin{equation}
 \widetilde{ A^{ \varepsilon}}: \mathcal R \to \cfsa(V) \textrm{ as } 
 \widetilde{ A^{ \varepsilon}}(t,s)\=  A_t -
 s\varepsilon\Id
\end{equation}
where $\mathcal R\=[a,b]\times [0,1]$. Being the rectangle topologically 
trivial  and by  invoking 
the homotopy property of the spectral flow it follows that the spectral flow of 
the path 
obtained by restricting $\widetilde{ A^\varepsilon}$ to the boundary of 
$\mathcal R$ 
is zero.
Thus by the path additivity property of the spectral flow, we have
\begin{equation}\label{eq:laravaelafava}
 \spfl( A_t(0); t \in [a,b]) =\spfl( A^{\varepsilon}(t); t \in [a,b])
 +  \spfl( A_a(s); s \in [0,1])- \spfl( A_b(s); s \in [0,1]).
\end{equation}
Since  by  the choice of $\varepsilon$  the paths $s\mapsto  A_a(s)$ and 
$s\mapsto  A_b(s)$ have no crossing instants other than, possibly, the initial 
and the final ones, we get 
\begin{equation}\label{eq:laravaelafava2}
 \spfl( A_t; t \in [a,b]) =\spfl( A^{\varepsilon}(t); t \in [a,b])
 -  \dim \ker  A_a+ \dim \ker  A_b
\end{equation}
This conclude the proof. 
\end{proof}
As direct consequence of Lemma \ref{thm:differnza-spfl}, we are entitled to 
give the 
following definition.
\begin{defn}\label{def:epsilon-sf}
We term {\em $\varepsilon$-spectral flow\/} of the path $ A$ and we 
denote it by  
$\spfl_\varepsilon( A_t;t\in[a,b])$, the spectral flow (as given in definition 
\ref{def:spectral-flow-unb}) of the 
path $ A^{\varepsilon}:[a,b] \to\cfsa(V)$; i.e.
\[
 \spfl_\varepsilon( A_t;t\in [a,b])\= \spfl( A^\varepsilon_t;t\in [a,b]).
\]
\end{defn}
By Definition \ref{def:epsilon-sf} and Lemma \ref{thm:differnza-spfl}, we get 
the following
\begin{equation}\label{eq:laravaelafava-giusta}
 \spfl( A_t;t\in [a,b]) =\spfl_\varepsilon( A_t;t\in [a,b]) -
 \dim \ker  A_a + \dim \ker  A_b.
\end{equation}
In particular if the endpoints of $ A$ are invertible, they coincide. 
We close this Section by recalling the relation between the spectral flow and 
another integer known in literature as  {\em relative Morse index\/}.
\begin{defn}\label{def:rel-morse-index}(\cite[Definition 2.8]{ZL99}).
 Let $ A , B\in \cfsa(V)$ and we assume that $ B$ is 
 $ A$-compact (namely compact in the graph norm topology of $ A$). Then the 
 {\em relative Morse index of the pair $ A$, $ A+ B$\/} is defined by 
 \[
  \irel( A,  A+ B)=-\spfl(\widetilde{ A};[a,b])
 \]
for $\widetilde{ A}\= A+ \widetilde{ B}$ and where $  \widetilde{ B}$ 
is any continuous curve of $ A$-compact operators such that $\widetilde{ B}(a)= 
0$ is the null operator and $ \widetilde{ B}(b)=  B$.
\end{defn}
\begin{rem}
 We observe that in the aforementioned paper the authors considered the more 
general case of bounded 
 Fredholm  operators in Banach spaces. However  
 the extension to the case considered above presents no difficulties. 
\end{rem}


\section{A spectral flow formula for families of Hamiltonian 
systems}\label{sec:spectral-flow-formula}

The scope of this Section which is the core of the paper  is twofold: on  one 
hand we 
construct an index theory and we prove a new  spectral flow formula for a 
one-parameter family of (linear) Hamiltonian systems defined  on unbounded 
intervals; on the 
other hand we recover the well-known index theory in the case of bounded 
intervals. 

We start by associating to the family of  Hamiltonian systems  a
\begin{itemize}
\item {\em geometrical index\/} defined in terms of the $\iCLM$-index 
of a suitable pair of Lagrangian paths defined in terms of the invariant 
(stable and unstable) subspaces and 
encoding the symplectic properties of the solution space;
\item {\em spectral  index\/}  will be given in terms 
of the spectral flow  of a path of first order elliptic operators and it is 
devoted to 
detect  the analytic properties of the arising path of closed selfadjoint 
Fredholm operators. 
\end{itemize}

 We start by recalling that  $T \in \Mat(2n,\R)$ 
is termed {\em hyperbolic\/} if its spectrum does not meet the imaginary axis. 
In this case, 
the spectrum of a hyperbolic operator $T$ consists of two isolated closed 
components (one of 
which may be empty) 
\begin{equation}\label{eq:convergence-2}
 \sigma(T) \cap \Set{z \in \C| \Re(z)<0} \textrm{ and } \sigma(T) \cap \Set{z 
\in \C| \Re(z)>0}.
\end{equation}
Let $\R^{2n}= V^-(T) \oplus V^+(T)$ be the corresponding $T$-invariant 
splitting of 
$\R^{2n}$ into closed 
subspaces, given by the spectral decomposition with projections $P^-(T)$ and 
$P^+(T)$. So 
\begin{equation}
 \begin{split}
  \sigma\left(T\vert_{V^-(T)}\right)= \sigma(T) \cap \Set{z \in \C| \Re(z)<0} &
\textrm{ and }\\
   \sigma\left(T\vert_{V^+(T)}\right)&= \sigma(T) \cap \Set{z \in \C| \Re(z)>0}.
 \end{split}
\end{equation}
Given  the one-parameter family of Hamiltonian systems 
\begin{equation}\label{eq:Ham-sys-bvp}
\begin{cases}
  z'(t)= S_\lambda(t)\, z(t) \qquad t \in \R\\
  \lim_{t \to -\infty} z(t)=0= \lim_{t \to +\infty} z(t)
 \end{cases}
\end{equation}
we define the two-parameter family of matrix-valued maps 
$\gamma_{(\tau,\lambda)}: \R \to \Mat(2n,\R)$ parametrised by  $(\tau,\lambda) 
\in  
 \R\times [0,1]$ as the fundamental solutions of the Hamiltonian systems given 
in Equation 
\eqref{eq:Ham-sys-bvp}, namely the matrix solutions of the following Cauchy 
problem
\begin{equation}\label{eq:cauchy}
 \begin{cases}
   \gamma'_{(\tau,\lambda)}(t)=S_\lambda(t)\gamma_{(\tau,\lambda)}(t), 
\qquad t \in \R\\
  \gamma_{(\tau,\lambda)}(\tau)=\Id
 \end{cases}
\end{equation}
where $S_\lambda(t)\=S(\lambda,t)$. We recall that the stable and unstable 
spaces of the Hamiltonian system given in Equation \eqref{eq:Ham-sys-bvp} are 
respectively 
given by 
\begin{equation}\label{eq:stabili-nonasym}
  E^s_\lambda(\tau)\=\Set{v \in \R^{2n}|\lim_{t\to +\infty} 
\gamma_{(\tau,\lambda)}(t)v=0} \textrm{ and }
   E^u_\lambda(\tau)\=\Set{v \in \R^{2n}|\lim_{t\to -\infty} 
\gamma_{(\tau,\lambda)}(t)v=0}.
\end{equation}
Let us  define the {\em asymptotic Hamiltonian systems \/}
\begin{equation}\label{eq:Ham-sys-bvp-asym}
\begin{cases}
  z'(t)= S_\lambda(\pm\infty)\, z(t) \qquad t \in \R\\
  \lim_{t \to -\infty} z(t)=0= \lim_{t \to +\infty} z(t)
 \end{cases}
\end{equation}
and as before we define the 
stable and unstable spaces as follows
\begin{multline}\label{eq:stabili-asym}
 E^s_\lambda(\pm \infty)\=\Set{v \in \R^{2n}|\lim_{t\to +\infty} \exp{\big(t 
S_\lambda(\pm\infty)\big)}v=0} 
 \textrm{ and }\\
   E^u_\lambda(\pm \infty)\=\Set{v \in \R^{2n}|\lim_{t\to -\infty}\exp{\big(t 
S_\lambda(\pm\infty)\big)}v=0}.
\end{multline}
By invoking \cite[Proposition 1.2]{AM03}, the assumption (H1), implies the 
following  convergence result on the invariant manifolds
\begin{equation}\label{eq:conv-abbomendola}
 \lim_{\tau \to +\infty}E^s_\lambda(\tau)=E^s_\lambda(+\infty) 
\textrm{ and }  
 \lim_{\tau \to -\infty} E^u_\lambda(\tau)= E^u_\lambda(-\infty)
 \end{equation}
 where the limit is uniform with respect to the parameter $\lambda$ in the gap 
(metric) topology 
 of the  Grassmannian manifold.  
\begin{lem}\label{thm:stable-lagrangians}
If (H1) holds then the  space $E^s_\lambda(\tau)$, $(\tau, \lambda) \in 
[0,+\infty]\times [0,1]$,  
and  $E^u_\lambda(\tau)$,  $(\tau, \lambda) \in [-\infty,0]\times [0,1]$
defined  in Equations \eqref{eq:stabili-nonasym}-\eqref{eq:stabili-asym} belong 
to  $\Lagr(n)$.
\end{lem}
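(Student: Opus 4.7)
The plan is to prove separately isotropy (so that $E \subseteq E^\omega$) and the dimension count $\dim E = n$, which together characterise Lagrangian subspaces. The crucial observation throughout is that since $S_\lambda(t) = J B_\lambda(t)$ with $B_\lambda(t)$ symmetric (and analogously $S_\lambda(\pm\infty) = J B_\lambda(\pm\infty)$ by (H1)), the fundamental solution $\gamma_{(\tau,\lambda)}$ of Equation \eqref{eq:cauchy} and the matrix exponentials $\exp\bigl(t\,S_\lambda(\pm\infty)\bigr)$ are symplectic matrices for every $t$, $\tau$, and $\lambda$.

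For \emph{isotropy}, given $u,v \in E^s_\lambda(\tau)$ with $\tau$ finite, the symplectic invariance of $\gamma_{(\tau,\lambda)}(t)$ yields
\[
\omega(u,v) \;=\; \omega\bigl(\gamma_{(\tau,\lambda)}(t)\,u,\;\gamma_{(\tau,\lambda)}(t)\,v\bigr)
\]
for every $t \in \R$. Letting $t \to +\infty$, both arguments on the right-hand side tend to $0$ by the very definition of $E^s_\lambda(\tau)$, so bilinearity of $\omega$ forces $\omega(u,v)=0$. The identical computation with $\exp\bigl(t\,S_\lambda(+\infty)\bigr)$ treats $E^s_\lambda(+\infty)$, and the symmetric argument ($t\to -\infty$) handles $E^u_\lambda(\tau)$ and $E^u_\lambda(-\infty)$.

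For the \emph{asymptotic dimension count}, the Hamiltonian identity $J\,S_\lambda(\pm\infty)\,J^{-1}=-S_\lambda(\pm\infty)^{T}$ shows that $S_\lambda(\pm\infty)$ is conjugate to $-S_\lambda(\pm\infty)^{T}$, so its spectrum is symmetric under $\mu \mapsto -\mu$ (counting multiplicities). Combined with (H1), which forbids eigenvalues on $i\R$, this forces exactly $n$ eigenvalues in $\{\mathrm{Re}<0\}$ and $n$ in $\{\mathrm{Re}>0\}$; thus $\dim V^{\mp}(S_\lambda(\pm\infty)) = n$, which matches $\dim E^{s}_\lambda(\pm\infty) = \dim E^{u}_\lambda(\pm\infty) = n$.

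For the \emph{finite-time dimension count}, the convergence \eqref{eq:conv-abbomendola} takes place in the Grassmannian of $\R^{2n}$, which is the disjoint union of its fixed-dimension components; consequently $\dim E^s_\lambda(\tau) = n$ for all sufficiently large $\tau$ (and analogously $\dim E^u_\lambda(\tau) = n$ for sufficiently negative $\tau$). To extend the count to every $\tau \in [0,+\infty)$, I would invoke the cocycle identity $E^{s}_\lambda(\tau_0) = \gamma_{(\tau,\lambda)}(\tau_0)\,E^{s}_\lambda(\tau)$, a direct consequence of the composition rule $\gamma_{(\tau,\lambda)}(t) = \gamma_{(\tau_0,\lambda)}(t)\,\gamma_{(\tau,\lambda)}(\tau_0)$ for the solutions of \eqref{eq:cauchy}; the invertibility of $\gamma_{(\tau,\lambda)}(\tau_0)$ preserves dimension. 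The only delicate point is this last step: one must first extract the correct dimension in the large-$\tau$ regime from the Grassmannian convergence and then transport it backwards via the symplectic cocycle — the remaining ingredients being immediate consequences of symplectic invariance and of the spectral symmetry of Hamiltonian matrices.
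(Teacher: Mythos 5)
Your proof is correct and follows essentially the same route as the paper's: isotropy via the symplectic invariance of the flow together with decay of solutions in the stable/unstable spaces, the Lagrangian property of the asymptotic spaces from hyperbolicity, and the dimension count for finite $\tau$ via the convergence \eqref{eq:conv-abbomendola}. You usefully make explicit two points the paper leaves terse — the spectral symmetry $\sigma(S_\lambda(\pm\infty))=-\sigma(S_\lambda(\pm\infty))$ yielding the $n$-$n$ split, and the cocycle identity $E^s_\lambda(\tau_0)=\gamma_{(\tau,\lambda)}(\tau_0)E^s_\lambda(\tau)$ to transport the dimension count from large $\tau$ to all $\tau$ (where the paper appeals instead to continuity of the integer-valued dimension on the Grassmannian) — but the underlying argument is the same.
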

\begin{proof}
If $v,w:\R \to \R^{2n}$ are non-trivial solutions of the differential equation 
$z'- S_\lambda(t) z=0$ then 
$\omega\big(v(\tau), w(\tau)\big)=0$ is constant for all $\tau \in \R$. 
Moreover 
$\omega\big(v(\tau), w(\tau)\big)=0$ if $v(\tau), w(\tau) \in 
E^u_\lambda(\tau)$ or  
$v(\tau), w(\tau) \in E^s_\lambda(\tau)$, for some $\tau \in \R$. 
Thus $E^u_\lambda(\tau)$ and $E^s_\lambda(\tau)$ 
are isotropic subspaces of $(\R^{2n}, \omega)$. By taking into account   
(H1), $S_\lambda(\pm\infty)$ are hyperbolic matrices; thus in particular 
$E^s_\lambda(+\infty)=V^-\big(S_\lambda(+\infty)\big)$ 
and $E^u_\lambda(-\infty)=V^+ \big(S_\lambda(-\infty)\big)$ are Lagrangian 
subspaces. 
By  the convergence result stated in Formula
\eqref{eq:conv-abbomendola} and by taking into account that the dimension is 
a continuous integer-valued function  (and hence constant on the connected 
components of the Grassmannian), we conclude the proof. 
\end{proof}
\begin{lem}\label{thm:geo-well-defined}
 For every $\tau \in \R$, we have
 \[
  \iCLM\big( E^s_\lambda(0),E^u_\lambda(0)\big)=  \iCLM\big( 
E^s_\lambda(\tau),E^u_\lambda(\tau)\big).
 \]
\end{lem}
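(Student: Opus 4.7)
The plan is to exploit the symplectic covariance of the stable and unstable subspaces under the flow, together with the stratum homotopy property of $\iCLM$. The starting point is the cocycle identity for fundamental solutions, $\gamma_{(\tau,\lambda)}(t)=\gamma_{(0,\lambda)}(t)\gamma_{(0,\lambda)}(\tau)^{-1}$, which is immediate from uniqueness for the Cauchy problem \eqref{eq:cauchy}. Substituting this into the definitions \eqref{eq:stabili-nonasym} and making the change of variable $w=\gamma_{(0,\lambda)}(\tau)^{-1}v$, I obtain
\[
E^s_\lambda(\tau)=\gamma_{(0,\lambda)}(\tau)\,E^s_\lambda(0), \qquad E^u_\lambda(\tau)=\gamma_{(0,\lambda)}(\tau)\,E^u_\lambda(0).
\]

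With this identity in hand, I would introduce the two-parameter family of Lagrangian pairs
\[
F:[0,1]\times[0,1]\longrightarrow\Lagr(n)\times\Lagr(n),\qquad F(s,\lambda)\=\big(E^s_\lambda(s\tau),\,E^u_\lambda(s\tau)\big),
\]
which, by the identity above, coincides with $\big(\gamma_{(0,\lambda)}(s\tau)E^s_\lambda(0),\,\gamma_{(0,\lambda)}(s\tau)E^u_\lambda(0)\big)$. Joint continuity of $F$ in $(s,\lambda)$ is a combination of the joint continuity of the fundamental solution $\gamma_{(0,\lambda)}(t)$ in $(t,\lambda)$, which is standard ODE theory, with the continuous dependence on $\lambda$ of $E^s_\lambda(0)$ and $E^u_\lambda(0)$, which rests on assumption (H1) and the convergence result \eqref{eq:conv-abbomendola}. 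The restrictions $F(0,\cdot)$ and $F(1,\cdot)$ are precisely the two Lagrangian pair paths whose $\iCLM$ we want to compare.

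To invoke the stratum homotopy relative to the ends property of the $\iCLM$-index listed in Section \ref{sec:spectral-flow-and-Maslov}, what remains is to verify that the intersection dimensions at the endpoints of the $\lambda$-interval are independent of $s$. Here the symplectic covariance pays off: since $\gamma_{(0,\lambda)}(s\tau)\in\Sp(2n,\R)$ is an isomorphism of $\R^{2n}$, it restricts to an isomorphism
\[
E^s_\lambda(0)\cap E^u_\lambda(0)\ \xrightarrow{\ \sim\ }\ E^s_\lambda(s\tau)\cap E^u_\lambda(s\tau),
\]
so for each fixed $\lambda\in\{0,1\}$ this dimension equals $\dim\big(E^s_\lambda(0)\cap E^u_\lambda(0)\big)$ for every $s\in[0,1]$. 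The stratum homotopy property now yields
\[
\iCLM\big(E^s_\lambda(0),E^u_\lambda(0);\lambda\in[0,1]\big)=\iCLM\big(E^s_\lambda(\tau),E^u_\lambda(\tau);\lambda\in[0,1]\big).
\]

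The only mildly delicate ingredient is the joint continuity of $F$, which reduces to the continuous dependence of the invariant stable/unstable subspaces on the parameter $\lambda$; everything else is an immediate consequence of symplectic covariance and the naturality of the Maslov index under symplectic isomorphisms.
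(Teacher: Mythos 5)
Your proof is correct and takes essentially the same route as the paper: both construct the two-parameter homotopy $(s,\lambda)\mapsto\big(E^s_\lambda(s\tau),E^u_\lambda(s\tau)\big)$ (the paper uses $(1-s)\tau$, which is just a reparametrisation), use the cocycle identity to express $E^{s/u}_\lambda(s\tau)$ as the image of $E^{s/u}_\lambda(0)$ under the symplectomorphism $\gamma_{(0,\lambda)}(s\tau)$ so that the intersection dimension at fixed $\lambda$ is constant in $s$, and then invoke stratum homotopy invariance of $\iCLM$. You are somewhat more explicit than the paper about which constancy needs checking (only at $\lambda\in\{0,1\}$) and about the joint continuity of the homotopy, but the argument is the same.
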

\begin{proof}
The proof relies on a very straightforward stratum homotopy argument. For, let 
us consider the following 
homotopy.
\[
 h\=(h_1, h_2): [0,1] \times [0,1] \to \Lagr(n) \times \Lagr(n), \qquad 
h(\lambda, 
s)= \big(E^s_\lambda((1-s)\cdot \tau),
 E^u_\lambda((1-s)\cdot \tau)\big).
\]
Since $E^u_\lambda(\tau')=\gamma_{(\tau,\lambda)}(\tau')E^u_\lambda(\tau)$ and 
$E^s_\lambda(\tau')=\gamma_{(\tau,\lambda)}(\tau')E^s_\lambda(\tau)$,
for every $\tau \in \R$, $\dim \big(E^u_\lambda(\tau) \cap 
E^s_\lambda(\tau)\big)$ is independent on $\tau$. By this 
argument  and by 
the stratum homotopy invariance 
property of the $\iCLM$ the thesis follows. This conclude the proof.  
\end{proof}\br
Let  $\mathcal R^-\=[-\infty,0]\times I$, $\mathcal R^+\=
[0,+\infty]\times [0,1]$ and let us define the 
continuous two-parameter family of Lagrangian subspaces: 
\begin{multline}
 E^u: \mathcal R^- \to \Lagr(n)\textrm{ defined by } E^u(\tau, 
\lambda)\=E^u_\lambda(\tau)
 \textrm{ and } \\
 E^s: \mathcal R^+ \to \Lagr(n) \textrm{ defined by } E^s(\tau, 
\lambda)\=E^s_\lambda(\tau).
\end{multline}\br
Being $\mathcal R^\pm$ topologically trivial, it follows that the pair 
$\big(E^s(0),E^u(0)\big) \in \P([0,1]; \R^{2n})$ pointwise given by   
$\big( E^s_\lambda(0), E^u_\lambda(0)\big)$ is stratum homotopic with respect 
to the endpoints to the 
pair 
$\big(E^s_0(\tau)*E^s_\lambda(+\infty)*E^s_1(-\tau),
E^u_0(-\tau)*E^u_\lambda(-\infty)*E^u_1(\tau)\big)$. 
\begin{prop}\label{thm:additivity}
The following equality holds:
\begin{multline}
 \iCLM(E^s_\lambda(0), E^u_\lambda(0);\lambda \in[0,1])= \iCLM\big(
E^s_0(\tau),E^u_0(-\tau); \tau \in[0,+\infty)\big) \\+ 
 \iCLM\big(E^s_\lambda(+\infty),E^u_\lambda(-\infty); \lambda \in [0,1]\big)- 
\iCLM 
\big(E^s_1(\tau),E^u_1(-\tau); \tau \in [0,+\infty)\big). 
 \end{multline}
\end{prop}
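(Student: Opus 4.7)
The plan is to deduce the identity from the two fundamental properties of the CLM index already listed in Section \ref{sec:spectral-flow-and-Maslov}, namely the \emph{stratum homotopy relative to the ends} and the combination of \emph{path additivity} with the \emph{reversal} property, by pushing forward the contractibility of the rectangles $\mathcal R^{\pm}$.

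First I would package $E^{s}$ and $E^{u}$ into a single two-parameter Lagrangian pair on the rectangle $R \= [0,+\infty]\times[0,1]$ by setting
\begin{equation*}
F(\tau,\lambda) \= \bigl(E^{s}_{\lambda}(\tau),\,E^{u}_{\lambda}(-\tau)\bigr),\qquad (\tau,\lambda)\in R,
\end{equation*}
which is well defined, continuous, and Lagrangian-valued by Lemma \ref{thm:stable-lagrangians} together with (H1) and the uniform convergence \eqref{eq:conv-abbomendola}. The CLM index of the pair $(E^{s}_{\lambda}(0),E^{u}_{\lambda}(0))$ on $[0,1]$ is the CLM index of $F$ restricted to the left edge $\{0\}\times[0,1]$, while the CLM index of the three-piece concatenation $\bigl(E^{s}_{0}(\tau)\ast E^{s}_{\lambda}(+\infty)\ast E^{s}_{1}(-\tau),\ E^{u}_{0}(-\tau)\ast E^{u}_{\lambda}(-\infty)\ast E^{u}_{1}(\tau)\bigr)$ is the CLM index of $F$ restricted to the union of the bottom edge, the right edge, and the top edge traversed backwards. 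Both boundary curves share the same endpoints $(E^{s}_{0}(0),E^{u}_{0}(0))$ and $(E^{s}_{1}(0),E^{u}_{1}(0))$.

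Next, since $R$ is contractible, any two paths in $\partial R$ with the same endpoints are homotopic rel endpoints inside $R$; composing such a rectangular homotopy with $F$ produces a continuous homotopy in $\mathscr P([0,1];\R^{2n})$ between the left-edge pair and the three-edge pair, whose endpoints \emph{do not move} at all during the homotopy. In particular $\dim\bigl(L_{1}(s,0)\cap L_{2}(s,0)\bigr)$ and $\dim\bigl(L_{1}(s,1)\cap L_{2}(s,1)\bigr)$ are trivially constant in $s$, so the stratum homotopy property applies and forces the two CLM indices to coincide.

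Finally I would unfold the CLM index of the concatenated pair via path additivity into three pieces (one per edge) and apply the reversal property to the top edge, which is parametrised as $\tau\mapsto(E^{s}_{1}(-\tau),E^{u}_{1}(\tau))$ with $\tau$ running backwards: this reversal produces exactly the minus sign attached to $\iCLM\bigl(E^{s}_{1}(\tau),E^{u}_{1}(-\tau);\tau\in[0,+\infty)\bigr)$ in the claimed identity. The only genuinely non-formal point in the whole argument is the continuous extension of $F$ to the edges $\tau=+\infty$ of $R$, i.e.\ the fact that $E^{s}_{\lambda}(\tau)\to E^{s}_{\lambda}(+\infty)$ and $E^{u}_{\lambda}(-\tau)\to E^{u}_{\lambda}(-\infty)$ \emph{uniformly} in $\lambda$ inside the gap topology of $\Lagr(n)$; this is exactly what (H1) together with the Abbondandolo--Majer type estimate invoked in \eqref{eq:conv-abbomendola} guarantees, and the rest of the argument is pure bookkeeping with the four basic properties of $\iCLM$.
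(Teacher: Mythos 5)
Your proposal is correct and takes essentially the same approach as the paper: the argument there likewise uses the topological triviality of the parameter rectangle to stratum-homotope the left edge $(E^s_\lambda(0),E^u_\lambda(0))$ to the concatenation of the bottom, right, and reversed top edges, and then invokes path additivity and the reversal property of $\iCLM$, with (H1) and the uniform convergence \eqref{eq:conv-abbomendola} justifying the continuous extension to $\tau=+\infty$.
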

\begin{proof}
By the invariance of the $\iCLM$ index for stratum-homotopy and the 
additivity for concatenation of 
paths (cf. \cite[Properties II \& III, pag.127]{CLM94}, we have that 
\begin{multline}\label{eq:1}
\iCLM\big( E^s_\lambda(0), E^u_\lambda(0);\lambda \in [0,1]\big)\\= 
\iCLM\big(E^s_0(\tau)*E^s_\lambda(+\infty)*E^s_1(-\tau),
E^u_0(-\tau)*E^u_\lambda(-\infty)*E^u_1(\tau); \tau \in [0,+\infty)\cup 
[0,1]\cup(-\infty,0]\big)\\
 =\iCLM\big(E^s_0(\tau),E^u_0(-\tau);\tau \in [0,+\infty)\big)+ 
\iCLM\big(E^s_\lambda(+\infty),E^u_\lambda(-\infty);\lambda \in [0,1]\big)\\+
\iCLM\big(E^s_1(-\tau),E^u_1(\tau);\tau \in (-\infty,0]\big).
 \end{multline}
(Cf. Figure \ref{fig:stable}).  
By the reversal property of the $\iCLM$-index given in Section 
\ref{sec:spectral-flow-and-Maslov} 
(cf. \cite[Property X, pag. 130]{CLM94}),  we get that the last term in 
Equation 
\eqref{eq:1} can be written according to the following expression 
\begin{equation}\label{eq:2}
 \iCLM\big(E^s_1(-\tau),E^u_1(\tau);\tau \in (-\infty,0]\big)=- 
\iCLM\big(E^s_1(\tau),E^u_1(-\tau);\tau \in [0,+\infty)\big).
\end{equation}
The conclusion is obtained by putting together Equation \eqref{eq:1} and 
Equation \eqref{eq:2}. 
\end{proof}
Let $\mathcal W\=W^{1,2}(\R, \R^{2n})$ be the Sobolev space of all 
functions in $L^2(\R, \R^{2n})=:\mathcal V$ 
having derivatives in $\mathcal V$. By standard regularity arguments 
it readily  follows that any solution of the 
boundary value problems given in Equation \eqref{eq:Ham-sys-bvp} belongs to 
$\mathcal W$. Now, for each $\lambda \in [0,1]$, we define  the operators
\begin{equation}\label{eq:operatori}
  A_\lambda\= -J \frac{d}{dt} -B_\lambda(t): \mathcal W \subset 
 \mathcal V \longrightarrow \mathcal V.
\end{equation}
By invoking \cite[Theorem 2.1]{RS95}, it follows that for each $\lambda \in 
[0,1]$, 
$ A_\lambda$  is a closed unbounded 
selfadjoint Fredholm operator in $\mathcal V$ having domain $\mathcal  W$ and hence it 
remains a well-defined continuous path of closed selfadjoint Fredholm 
operators
\begin{equation}\label{eq:path-fredholm}
 A: [0,1] \longrightarrow \cfsa(\mathcal V): \lambda \longmapsto A_\lambda.
\end{equation}
\begin{defn}\label{def:spectral-index}
We term {\em spectral index\/} of the family of Hamiltonian systems given in 
Equation \eqref{eq:Ham-sys-bvp}, the integer $\ispec( A)$ defined as the  
spectral flow of the  path given in \eqref{eq:path-fredholm}; i.e.
\[
 \ispec(A)\= -\spfl(A; [0,1]).
\]
\end{defn}
For  $t_1, t_2 \in \R$ with $t_1 <t_2$ be fixed and let us denote by 
$\mathcal W_{\lambda,[t_1,t_2]}$ the 
Sobolev space defined by 
\[
\mathcal W_{\lambda,[t_1,t_2]}\=\Set{u \in W^{1,2}([t_1, t_2]; 
\R^{2n})| u(t_1) \in E^u_\lambda(t_1) 
 \textrm{ and } u(t_2) \in  E^s_\lambda (t_2)}.
 \]
Let us define the operators $ A_{\lambda,[t_1,t_2]}$  as  the restriction to $
W_{\lambda}([t_1,t_2];\R^{2n})$ of the operator $ A_\lambda$. 
\begin{prop}\label{thm:stessi-spfl}
For any  $t_1 ,t_2 \in \R$ with  $t_1 <t_2$ it follows that 
\begin{enumerate}
\item for every $\lambda \in [0,1]$, $A_{\lambda,[t_1,t_2]}$ is degenerate if 
and only if 
$A_\lambda$ is degenerate; i.e.
\[
 \dim \ker A_{\lambda,[t_1,t_2]}= \dim \ker A_\lambda.
 \]
\item $
 \spfl(A_{\lambda,[t_1,t_2]};\lambda \in[0,1])= \spfl(A_\lambda  ;\lambda \in 
[0,1])$.
\end{enumerate}
\end{prop}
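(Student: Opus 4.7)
For part (1), I would define the restriction map $\Phi_\lambda\colon \ker A_\lambda \to \ker A_{\lambda,[t_1,t_2]}$ by $u\mapsto u|_{[t_1,t_2]}$. Any $u\in\ker A_\lambda$ lies in $W^{1,2}(\R,\R^{2n})$ and solves $u'=S_\lambda(t)u$; since the matrices $S_\lambda(\pm\infty)$ are hyperbolic by (H1) and $u\in L^2$, such a solution must decay exponentially at $\pm\infty$, which by the defining relations of the stable and unstable subspaces in \eqref{eq:stabili-nonasym} forces $u(t_1)\in E^u_\lambda(t_1)$ and $u(t_2)\in E^s_\lambda(t_2)$. Hence $\Phi_\lambda$ is well-defined and clearly injective. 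For surjectivity, given $v\in\ker A_{\lambda,[t_1,t_2]}$ I would extend $v$ by the Hamiltonian flow $\gamma_{(t_1,\lambda)}$ on $(-\infty,t_1]$ and $\gamma_{(t_2,\lambda)}$ on $[t_2,+\infty)$; the boundary conditions $v(t_1)\in E^u_\lambda(t_1)$ and $v(t_2)\in E^s_\lambda(t_2)$ guarantee exponential decay at $\pm\infty$, so the extension lies in $W^{1,2}(\R,\R^{2n})\cap\ker A_\lambda$. This proves (1).

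For part (2), I would use the crossing-form characterisation of the spectral flow. After a stratum-preserving perturbation to a $\mathscr{C}^1$-path with only regular interior crossings (which does not alter the spectral flow by its homotopy invariance), part (1) identifies the crossing instants and the crossing kernels of $A_\lambda$ with those of $A_{\lambda,[t_1,t_2]}$ canonically via $\Phi_\lambda$. It therefore suffices to show that the two crossing forms agree pointwise under this identification, since then the signatures at every crossing match and the sums in \eqref{eq:formula-sf-crossings} coincide.

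At a crossing $\lambda_0$, with $u\in\ker A_{\lambda_0}$ and $v=u|_{[t_1,t_2]}$, the whole-line crossing form reads
\[
\Gamma(A,\lambda_0)[u]=-\int_{\R}\langle \dot B_{\lambda_0}(t)\,u(t),u(t)\rangle\,dt,
\]
since $\dot A_\lambda=-\dot B_\lambda$. The operator $A_{\lambda,[t_1,t_2]}$ has the same differential expression, but its domain $\mathcal W_{\lambda,[t_1,t_2]}$ depends on $\lambda$ through the Lagrangian boundary conditions $E^u_\lambda(t_1),\,E^s_\lambda(t_2)$, so the corresponding crossing form acquires a boundary contribution $\mathcal{B}(\lambda_0,u)$ encoding the symplectic pairings of $u(t_i)$ with the tangent vectors to $\lambda\mapsto E^u_\lambda(t_1)$ and $\lambda\mapsto E^s_\lambda(t_2)$ at $\lambda_0$. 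The decisive identity is
\[
\mathcal{B}(\lambda_0,u)=-\int_{\R\setminus[t_1,t_2]}\langle \dot B_{\lambda_0}(t)\,u(t),u(t)\rangle\,dt,
\]
which I would obtain by differentiating the relations $y_\lambda(t)=\gamma_{(t_2,\lambda)}(t)\,y_\lambda(t_2)\to 0$ at $+\infty$ (and the symmetric one at $-\infty$), applying variation of parameters to the inhomogeneous system satisfied by $\partial_\lambda y_\lambda$, and using the Lagrangian character of $E^{u/s}_\lambda$ to convert the resulting symplectic pairings into tail integrals of $\langle \dot B_{\lambda_0}u,u\rangle$. Once this is established, $\Gamma(A,\lambda_0)[u]=\Gamma(A_{[t_1,t_2]},\lambda_0)[v]$ and summing signatures over crossings yields (2).

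The principal technical obstacle is the rigorous derivation of the boundary term $\mathcal{B}(\lambda_0,u)$ for the moving-domain crossing form, together with the verification of the tail-integral identity above. This hinges on correctly handling the $\lambda$-dependence of the Lagrangian boundary conditions and on exploiting the Lagrangian property of $E^{u/s}_\lambda$ to ensure convergence of the improper integrals and cancellation of boundary terms at infinity.
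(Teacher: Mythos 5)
Your proof of part (1) is essentially the paper's: the paper also establishes the bijection $\ker A_\lambda \leftrightarrow \ker A_{\lambda,[t_1,t_2]}$ by restricting to $[t_1,t_2]$ (using that $u\in\ker A_\lambda$ forces $u(t_1)\in E^u_\lambda(t_1)$, $u(t_2)\in E^s_\lambda(t_2)$) and, conversely, by propagating a kernel element of $A_{\lambda,[t_1,t_2]}$ outward with the flow, exactly as you propose.

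For part (2) you take a genuinely different route from the paper. The paper never touches crossing forms in the $\lambda$-direction at all: it partitions $[0,1]$ into subintervals on which the local spectral decomposition is stable, introduces the auxiliary $s$-perturbation $A_{\lambda,s}:=A_\lambda+s\chi_{[t_1,t_2]}I$ (and similarly $A_{\lambda,s,[t_1,t_2]}$), runs the homotopy-on-a-rectangle argument to convert each $\lambda$-spectral flow into a difference of $s$-spectral flows at the two endpoints of the subinterval, observes that the latter are monotone and hence reduce to a sum of kernel dimensions, and then applies part (1) \emph{to the $s$-perturbed operators} to conclude. This deliberately avoids the moving-domain crossing-form formalism. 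Your route — identifying crossings via part (1), and then matching crossing forms, with the boundary contribution $\mathcal{B}(\lambda_0,u)$ shown to equal the tail integral $-\int_{\R\setminus[t_1,t_2]}\langle\dot B_{\lambda_0}u,u\rangle\,dt$ — is plausible, and a sketch integration by parts on $(-\infty,t_1]\cup[t_2,+\infty)$ together with the Lagrangian character of $E^{u/s}_\lambda$ does suggest the boundary pairings $\omega(\dot v(t_i),u(t_i))$ agree on both sides. But this is precisely the step you have not carried out, and you acknowledge it yourself as ``the principal technical obstacle.''

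This is therefore a genuine gap, not a cosmetic omission. Establishing the moving-domain crossing form for $A_{\lambda,[t_1,t_2]}$ requires either a conjugation to a fixed domain by a family of isomorphisms tracking $E^u_\lambda(t_1),E^s_\lambda(t_2)$, or a careful direct computation with boundary terms, and in either case one must confront the fact that at a regular crossing the kernel generically collapses, so one cannot smoothly extend a kernel element $u_{\lambda_0}$ to $u_\lambda\in\ker A_\lambda$; the naive ``differentiate the relation $y_\lambda(t)\to 0$'' does not immediately apply. If you want to pursue this route you must show the quadratic form on $\ker A_{\lambda_0,[t_1,t_2]}$ defined intrinsically (via the projection $P_{\lambda_0}\partial_\lambda P_{\lambda_0}$ after a suitable gauge fixing of the domain) coincides under $\Phi_{\lambda_0}$ with the whole-line crossing form $-\int_\R\langle\dot B_{\lambda_0}u,u\rangle\,dt$. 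Until that is done, part (2) remains unproven by your argument. The paper's partition/perturbation argument is worth internalising as a way to sidestep exactly this difficulty: it only ever invokes part (1) for the perturbed operators, and never needs a formula for a moving-domain crossing form.
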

\begin{proof}
($\Leftarrow$) We start to prove  that if $A_\lambda$ is degenerate 
then  $A_{\lambda,[t_1,t_2]}$ is degenerate. For, 
we assume that $ \dim \ker A_\lambda\neq\{0\}$ and we recall 
that 
 $ \dim \ker A_\lambda= \dim\big(E^u_\lambda(\tau)\cap E^s_\lambda(\tau)\big)$ 
for some 
 and hence for any $\tau \in \R$. Let $0 \neq v \in E^u_\lambda(\tau)\cap 
E^s_\lambda(\tau)$. 
Let  $v \in E^u_\lambda(\tau)\cap E^s_\lambda(\tau)$ then in particular $v \in  
E^u_\lambda(t_1)$ and 
$v \in  E^s_\lambda(t_2)$. Thus $v \in  E^u_\lambda(t_1)\cap E^s_\lambda(t_2)$ 
and hence $ v \in \ker 
 A_{\lambda, [t_1,t_2]}$ and this conclude the proof of the first part. \\
 ($\Rightarrow$) In order to prove this second implication it is enough to show 
that if
  $A_{\lambda,[t_1,t_2]}$ is non-degenerate then $A_\lambda$ is non-degenerate. 
  
 For any $t_1 <t_2$, we start by observing that 
$\gamma_{t_1,\lambda}(t_2)E^u_\lambda(t_1)=E^u_\lambda(t_2)$. 
Thus there exists $0 \neq v(t_1) \in E^u_\lambda(t_1)$. We now define 
$v(t)\=\gamma_{t_1,\lambda}\big(v(t_1)\big)$ and we observe that $ v(t_2) \in 
E^s_\lambda(t_2)$. 
In conclusion the non trivial function 
\[
v(t)\=\begin{cases}
 \gamma_{t_1,\lambda}(-t)\big(v(t_1)\big) & t \leq t_1\\
 \gamma_{t_1,\lambda}(t)\big(v(t_2)\big) & t_1 \leq t \leq t_2\\
 \gamma_{t_1+t_2,\lambda}(t)\big(v(t_1)\big) & t \geq t_2
\end{cases}
\]
belongs to  $\ker A_\lambda$. This conclude the proof of the first statement.

The proof of the second statement relies on the very definition of the spectral 
flow (cf. Definition \ref{def:spectral-flow-unb}). 
We start to  choose  a 
sufficiently small partition of the interval $[0,1]$, namely $0\=\lambda_0 < 
\dots < \lambda_n\=1$. Thus, we can find operators $ A_i \in \cfsa(\mathcal V)$ 
and positive real numbers $a_i$, $i=1, \dots, n$ in such a way the dimension of 
the 
spectral spaces $E_{[-a_i,a_i]}( A_\lambda)$ is constant for $\lambda \in 
[\lambda_{i-1}, \lambda_i]$. Up to refine the partition we can also assume that 
also the dimension of the spectral spaces $E_{[-a_i,a_i]}( A_{\lambda, 
[t_1,t_2]})$ 
is constant for $\lambda \in [\lambda_{i-1}, \lambda_i]$. 

Moreover we assume $E_{[-3a_i,3a_i]}( A_{\lambda, 
[t_1,t_2]})=E_{[-a_i,a_i]}( A_{\lambda, 
[t_1,t_2]})$ for $\lambda\in[\lambda_{i-1}, \lambda_i]$, that is, 
no eigenvalues belongs to the interval $(-3a_i,-a_i)\cup (a_i,3a_i)$. 
Let $\chi_{[t_1,t_2]}$ be the characteristic function of $[t_1,t_2]$, and we 
denote
$A_{\lambda, s,
[t_1,t_2]}:=A_{\lambda, [t_1,t_2]}+s \chi_{[t_1,t_2]} I$ for $s\in[0,2a_i]$.
By the homotopy property of spectral flow, from the fact that 
$\spfl(A_{\lambda, 
2a_i, [t_1,t_2]}; [\lambda_{i-1}, \lambda_i]))=0 $ we have 
\[
\spfl(A_{\lambda, [t_1,t_2]}; \lambda \in[\lambda_{i-1}, \lambda_i])) 
=\spfl(A_{\lambda_{i-1}, s, [t_1,t_2]}; s \in [0, 
2\,a_i]))-\spfl(A_{\lambda_{i}, s, 
[t_1,t_2]}; s \in [0, 2\,a_i]). 
\]
On the other hand, we let $A_{\lambda,s}:=A_{\lambda}+s \chi_{[t_1,t_2]} I$ for 
$s\in[0,2\,a_i]$. Similarly, we have 
\[
\spfl(A_{\lambda}; \lambda \in [\lambda_{i-1}, \lambda_i])) 
=\spfl(A_{\lambda_{i-1}, s}; 
s \in [0, 2\,a_i]))-\spfl(A_{\lambda_{i}, s}; s \in [0, 2\,a_i]). 
\]
We also observe that, for $\lambda\in[\lambda_{i-1},\lambda_i]$, 
\begin{multline}
\spfl(A_{\lambda, s, [t_1,t_2]}; s\in [0, 2\,a_i]))=\sum_{s=0}^{ 
2\,a_i}\dim\ker(A_{\lambda, s, 
[t_1,t_2]}) \\
 \spfl(A_{\lambda, s}; s \in [0, 
2\,a_i]))=\sum_{s=0}^{2\,a_i}\dim\ker(A_{\lambda, 
s}) .
\end{multline}
By arguing precisely as before,  we can conclude  
that 
\[
\dim\ker(A_{\lambda, s, [t_1,t_2]}) =\dim\ker(A_{\lambda, s})
\]
and by this last equality immediately follows that 
\[
  \spfl(A_\lambda; \lambda \in [\lambda_{i-1}, \lambda_i])= \spfl( A_{\lambda, 
[t_1,t_2]};\lambda \in 
  [\lambda_{i-1}, 
\lambda_i]).
\]
Summing up all over $i=0, \dots, n$, we get the thesis. This conclude the 
proof. 
\end{proof}

For each $\lambda \in [0,1]$ and for some $t_0 \in \R$, we set
\[
\mathcal X_{\lambda}\=\Set{u \in W^{1,2}([0,1], \R^{2n})| u(0) \in E^u_\lambda(t_0) 
\textrm{ and } 
u(1) \in E^s_\lambda(t_0)}
\]
and we consider the elliptic selfadjoint 
first order differential operator 
\[
 D_\lambda\=D\big(E^u_\lambda(t_0), E^s_\lambda(t_0)\big):
\mathcal X_{\lambda}\subset L^2([0,1]; \R^{2n}) \to L^2([0,1]; \R^{2n}) \textrm{ given 
by  }\]
\begin{equation}\label{eq:elliptic-operators}
  D_\lambda\=- J
 \dfrac{d}{dt}.
\end{equation}
\begin{prop}\label{thm:prop-prima-uguaglianza}
Let $ D$ be the  path pointwise defined by Equation 
\eqref{eq:elliptic-operators}.
The following equality holds
 \begin{equation}\label{eq:spflD=spflA}
  \spfl\big( D_\lambda ;\lambda \in [0,1]\big)= \spfl(A_\lambda ;\lambda \in 
[0,1]).
 \end{equation}
\end{prop}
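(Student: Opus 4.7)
The plan is to reduce the problem to a bounded time interval via Proposition \ref{thm:stessi-spfl}, then conjugate away the zero-order term $B_\lambda(t)$ by a pointwise symplectic cogredient transformation, and finally rescale time to land on $D_\lambda$. Concretely, fix any $t_1 < t_0 < t_2$ in $\R$; Proposition \ref{thm:stessi-spfl} yields
\[
\spfl(A_\lambda;\lambda\in[0,1]) \;=\; \spfl(A_{\lambda,[t_1,t_2]};\lambda\in[0,1]),
\]
so it suffices to show the right-hand side equals $\spfl(D_\lambda;\lambda\in[0,1])$.

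For each $\lambda \in [0,1]$ I would introduce the bounded invertible multiplication operator $U_\lambda \in \GL\big(L^2([t_1,t_2];\R^{2n})\big)$ defined by $(U_\lambda v)(t) := \gamma_{(t_0,\lambda)}(t)\,v(t)$. The path $\lambda \mapsto U_\lambda$ is continuous because $\gamma_{(t_0,\lambda)}(t)$ is jointly continuous and $\Sp(2n,\R)$-valued on $[t_1,t_2]\times[0,1]$. Combining $\gamma_{(t_0,\lambda)}' = S_\lambda\gamma_{(t_0,\lambda)}$ (with $S_\lambda = JB_\lambda$) with the symplectic identity $\gamma_{(t_0,\lambda)}^T J\,\gamma_{(t_0,\lambda)} = J$, a direct calculation gives
\[
U_\lambda^*\,A_{\lambda,[t_1,t_2]}\,U_\lambda\,v \;=\; -J\,v'
\]
on the domain $\{v \in W^{1,2}([t_1,t_2];\R^{2n}) : v(t_1)\in E^u_\lambda(t_0),\ v(t_2)\in E^s_\lambda(t_0)\}$, the transformed boundary conditions coming from the flow-invariance $E^{u,s}_\lambda(t_i) = \gamma_{(t_0,\lambda)}(t_i)\,E^{u,s}_\lambda(t_0)$. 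Up to the unitary time-rescaling $\tau = (t-t_1)/(t_2-t_1)$ and the positive scalar factor $(t_2-t_1)^{-1}$, this is precisely the operator $D_\lambda$ of Equation \eqref{eq:elliptic-operators}.

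To conclude I would argue that both the cogredient conjugation and the positive rescaling leave the spectral flow unchanged. For the rescaling this is immediate: $r\mapsto \big(1-r+r(t_2-t_1)^{-1}\big)D_\lambda$ is a stratum homotopy through positive multiples of $D_\lambda$, all with the same kernel. For the cogredient invariance I would use the explicit pointwise symplectic homotopy $\gamma^s_{(t_0,\lambda)}(t) := \gamma_{(t_0,\lambda)}\big(t_0 + s(t-t_0)\big)$ for $s\in[0,1]$, which deforms $U_\lambda$ to $\Id$ inside $\GL(L^2)$; the induced two-parameter family $(s,\lambda)\mapsto (U_\lambda^s)^*A_{\lambda,[t_1,t_2]}U_\lambda^s$ is a continuous family of closed self-adjoint Fredholm operators whose kernel dimensions at $\lambda = 0$ and $\lambda = 1$ are constant in $s$ (because invertible cogredience preserves $\dim\ker$). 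The stratum homotopy property of $\spfl$ then delivers $\spfl(U_\lambda^*A_{\lambda,[t_1,t_2]}U_\lambda;\lambda\in[0,1]) = \spfl(A_{\lambda,[t_1,t_2]};\lambda\in[0,1])$, proving Equation \eqref{eq:spflD=spflA}.

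The main obstacle is precisely the invariance of $\spfl$ under cogredient (non-unitary) conjugation: $U_\lambda^*AU_\lambda$ generally has very different spectrum from $A$, so preservation of the net eigenvalue crossing count needs justification. An alternative verification I would cross-check, bypassing the homotopy step, proceeds via the crossing-form formula of Equation \eqref{eq:formula-sf-crossings}: after a generic perturbation every crossing $\lambda_0$ is regular, the map $U_{\lambda_0}\colon \ker(U_{\lambda_0}^*A_{\lambda_0}U_{\lambda_0}) \to \ker A_{\lambda_0}$ is an isomorphism, and the quadratic crossing form of $U^*AU$ at $\lambda_0$ is the pullback along $U_{\lambda_0}$ of that of $A$; hence the two signatures (together with the boundary contributions at $\lambda = 0,1$) agree term by term, yielding equality of the spectral flows.
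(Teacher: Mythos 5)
Your proposal is correct, and it takes a genuinely different route from the paper. The paper, after the same reduction to $A_{\lambda,[t_1,t_2]}$ via Proposition~\ref{thm:stessi-spfl}, rescales time to $[0,1]$, multiplies by the positive scalar $(t_2-t_1)$, and then deforms the resulting operator $\widetilde A_{\lambda,[t_1,t_2]}$ to $D_\lambda$ by shrinking the interval $[t_1,t_2]$ to the point $t_0$; the first statement of Proposition~\ref{thm:stessi-spfl} guarantees constant kernel dimension at the endpoints $\lambda=0,1$ throughout the shrinking, so the stratum homotopy property applies. You instead conjugate $A_{\lambda,[t_1,t_2]}$ by the fundamental matrix $\gamma_{(t_0,\lambda)}$ acting pointwise as a symplectic gauge transformation; this kills the zero-order term \emph{exactly} and transports the boundary conditions from $E^{u/s}_\lambda(t_i)$ to $E^{u/s}_\lambda(t_0)$, producing $D_\lambda$ up to a positive time rescaling. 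The trade-off is that the conjugating operator is invertible but not unitary (symplectic matrices are generally not orthogonal), so spectra are not preserved and the invariance of $\spfl$ under the conjugation needs justification --- which you correctly flag as the crux, and resolve twice: once via a stratum homotopy contracting the gauge to the identity (the required endpoint kernel dimensions are $s$-independent because invertible conjugation preserves $\dim\ker$), and once via the crossing-form computation, where one checks that only the middle term of $\partial_\lambda\bigl(U_\lambda^* A_\lambda U_\lambda\bigr)$ survives on the kernel, giving the pullback of the original crossing form along the isomorphism $U_{\lambda_0}$. Both proofs reach the same place; your version makes the appearance of $D_\lambda$ conceptually transparent (it is $A_{\lambda,[t_1,t_2]}$ in the moving symplectic frame of the fundamental solution) and sidesteps the paper's shrink-the-interval limit, which is written somewhat loosely (the printed formula for $k(\lambda,\sigma)$ does not literally equal $\widetilde A_{\lambda,[t_1,t_2]}$ at $\sigma=0$, and the $\sigma$-dependent boundary conditions are left implicit). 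The cost is the extra care required for non-unitary cogredience, which you supply.
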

\begin{proof}
Changing  variable by setting $ s\=(t-t_1)/(t_2-t_1)\in [0,1]$ the operator  
$A_{\lambda,[t_1,t_2]}$ 
can be re-written as follows
\begin{equation}\label{eq:ilprimo}
A_{\lambda,[t_1,t_2]}= -\dfrac{1}{t_2-t_1}J\dfrac{d}{ds} - 
B_\lambda\big((t_2-t_1)s+ t_1\big) :W_{\lambda}
 \subset L^2([0,1];\R^{2n}) \to
  L^2([0,1];\R^{2n})
\end{equation}
where 
\begin{equation}\label{eq:wlambda}
\mathcal  W_{\lambda}\=\Set{u \in W^{1,2}([0,1]; 
\R^{2n})| u(0) \in E^u_\lambda(t_1) 
 \textrm{ and } u(1) \in  E^s_\lambda (t_2)}.
\end{equation}
We now define the operator
\begin{equation}\label{eq:2altri}
\widetilde A_{\lambda,[t_1,t_2]}\=(t_2-t_1) A_{\lambda,[t_1,t_2]} =
-J\dfrac{d}{ds} - (t_2-t_1)B_\lambda\big((t_2-t_1)s+ t_1\big)
\end{equation}
on the domain $\mathcal W_\lambda$ defined in 
Equation \eqref{eq:wlambda} 
Let $\tau :[0,1]\to \R$  be the (continuous) map defined by $\tau(\sigma)\= 
t_2+(1-\sigma)(t_1-t_2)$ and 
let $k: [0,1]\times [0,1] \to \cfsa(L^2([0,1];\R^{2n}))$ given by 
\[
 k(\lambda, \sigma)\= - J\dfrac{d}{ds}-\big(t_2-\tau(\sigma)\big)
 B_\lambda\big(\tau(\sigma)\big).
\]
We observe that $k(\lambda, 0)= \widetilde A_{\lambda,[t_1,t_2]}$ 
and $k(\lambda, 1)= D_\lambda$.  
Being $t_2-t_1 >0$ we get that
\[
 \spfl(\tilde A_{\lambda, [t_1,t_2]};\lambda \in [0,1])=\spfl(A_{\lambda, 
[t_1,t_2]};\lambda \in [0,1]) 
\]
and by using the second statement of  
Proposition \ref{thm:stessi-spfl} we get that $\spfl(\tilde 
A_{\lambda \in [t_1,t_2]};\lambda \in [0,1])= 
 \spfl(A_\lambda ;\lambda \in [0,1])$. Passing to the limit for $(t_2 -t_1)\to 0$ 
in Equation 
\eqref{eq:2altri}, the 
 path $\widetilde A_{[t_1,t_2]}$ pointwise reduces to $D$. By the first 
statement of  
Proposition \ref{thm:stessi-spfl} 
 immediately follows that the homotopy is admissible since the dimension of the 
kernel of the map $k$ is 
 independent on the homotopy parameter $\sigma$. By the stratum 
 homotopy invariance of the  spectral flow and with respect to the endpoints 
the 
thesis readily follows.  This conclude the proof. 
\end{proof}\br
As direct consequence of Proposition \ref{thm:prop-prima-uguaglianza},  we  get
\begin{prop}\label{thm:main-nils} If assumption (H1) holds, 
 then 
 \[
 \iCLM\big( E^s_\lambda(0), E^u_\lambda(0);\lambda \in [0,1]\big)= 
-\spfl(A_\lambda ;\lambda \in [0,1]).
 \]
\end{prop}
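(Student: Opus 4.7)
The plan is to combine the reduction already carried out in Proposition \ref{thm:prop-prima-uguaglianza} with the Cappell--Lee--Miller theorem that computes the spectral flow of a family of first-order elliptic operators on a bounded interval with Lagrangian boundary conditions in terms of the CLM index of those boundary Lagrangians. Concretely, by Proposition \ref{thm:prop-prima-uguaglianza} we have
\[
\spfl(A_\lambda;\lambda\in[0,1]) \;=\; \spfl(D_\lambda;\lambda\in[0,1]),
\]
where $D_\lambda = -J\,d/dt$ acts on the domain $\mathcal X_\lambda$ with Lagrangian boundary conditions $u(0)\in E^u_\lambda(t_0)$ and $u(1)\in E^s_\lambda(t_0)$. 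By Lemma \ref{thm:geo-well-defined}, the CLM index $\iCLM(E^s_\lambda(0),E^u_\lambda(0);\lambda\in[0,1])$ does not depend on the evaluation time, so I may freely set $t_0=0$.

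The second ingredient is to apply \cite[Theorem 0.4]{CLM94} to the continuous path $\lambda \mapsto D_\lambda$ on the fixed interval $[0,1]$ with the continuously varying Lagrangian boundary conditions $\bigl(E^u_\lambda(0),E^s_\lambda(0)\bigr)$. That theorem expresses the spectral flow of such a family as the negative of the CLM index of the associated ordered pair of Lagrangian paths, yielding
\[
\spfl(D_\lambda;\lambda\in[0,1]) \;=\; -\,\iCLM\bigl(E^s_\lambda(0),E^u_\lambda(0);\lambda\in[0,1]\bigr).
\]
Combining the two displayed identities gives the statement of the proposition.

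The main point requiring care is the reconciliation of sign conventions. Both the CLM Maslov index and the spectral flow admit competing orientation choices (the ordering of the pair of Lagrangians, the orientation of the parameter interval, and the sign in the symbol $\mp J\,d/dt$). The natural way to verify the sign is to reduce to regular crossings, where the spectral flow contribution at a $\lambda_0$ with $E^u_{\lambda_0}(0)\cap E^s_{\lambda_0}(0)\neq\{0\}$ is given by the crossing form of $D_\lambda$ on $\ker D_{\lambda_0}\cong E^u_{\lambda_0}(0)\cap E^s_{\lambda_0}(0)$, while the CLM contribution at the same crossing is given by the corresponding intersection form on the same space, with the opposite sign; this is precisely the computation encoded in \cite[Theorem 0.4]{CLM94}. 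Once this sign match is in hand, the identity follows by assembling the contributions from the finitely many (regular) crossings on $[0,1]$ via the crossing form formula for the spectral flow recalled in Section \ref{sec:spectral-flow-and-Maslov}.
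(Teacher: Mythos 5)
Your argument is correct and follows the paper's proof essentially step by step: both combine Proposition~\ref{thm:prop-prima-uguaglianza} (to replace $A_\lambda$ by the boundary-value operator $D_\lambda$) with \cite[Theorem 0.4]{CLM94} (to identify the spectral flow of $D_\lambda$ with the CLM index of the boundary Lagrangians). The only cosmetic difference is that the paper invokes \cite[Theorem 0.4]{CLM94} via the $\varepsilon$-spectral flow as $\spfl_\varepsilon(D_\lambda;\lambda\in[0,1]) = \iCLM\big(E^u_\lambda(0),E^s_\lambda(0);\lambda\in[0,1]\big)$ and then converts to the ordinary spectral flow and the reversed pair, whereas you quote the $\varepsilon$-free form directly and sketch a crossing-form sign check; both are doing the same endpoint-degeneracy bookkeeping.
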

\begin{proof} From \cite[Theorem 0.4]{CLM94}, we have 
\[ 
\spfl_\varepsilon( D_\lambda;\lambda \in [0,1])=\iCLM\big( E^u_\lambda(0), 
E^s_\lambda(0);\lambda \in 
[0,1]\big), 
\]
which implies 
\[-\spfl( D_\lambda;\lambda \in [0,1])=\iCLM\big( E^s_\lambda(0), 
E^u_\lambda(0);\lambda \in [0,1]\big).
\]
By invoking Proposition  
\ref{thm:prop-prima-uguaglianza} it is equal to 
$\spfl( A_\lambda ;\lambda \in [0,1])=\spfl( D_\lambda;\lambda \in [0,1])$.  
This conclude the proof. 
\end{proof}
\begin{rem} 
It is worth noticing that  Proposition \ref{thm:main-nils}  is  the  
generalization of the main result 
recently proved in by author in \cite{Wat15}.
\end{rem}
As consequence of Lemma \ref{thm:geo-well-defined} the integers 
$\dim\big(E^u_0(t_0)\cap E^s_0(t_0)\big)$ and $\dim\big(E^u_1(t_0)\cap 
E^s_1(t_0)\big)$ does not 
depend on $t_0$. Summing up all the results scattered so far we are in position 
to prove the  main result of this Section. 
\begin{thm}\label{thm:spectral-flow-formula}{\bf (Spectral flow formula)\/}
In the above notation and if  assumption (H1) holds, then we have
\begin{multline}
 \ispec(A)=\iCLM\big(E^s_0(\tau),E^u_0(-\tau);[0,+\infty)\big) + 
 \iCLM\big(E^s_\lambda(+\infty), E^u_\lambda(-\infty); [0,1]\big)\\ - \iCLM 
\big( E^s_1(\tau),E^u_1(-\tau); [0,+\infty)\big). 
\end{multline}
\end{thm}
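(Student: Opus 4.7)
The plan is to assemble the statement directly from two previously proved results: Proposition~\ref{thm:additivity}, which decomposes the $\iCLM$ index of the pair $\big(E^s_\lambda(0),E^u_\lambda(0)\big)$ along the boundary of the rectangle $[0,+\infty]\times[0,1]$ in the $(\tau,\lambda)$-plane, and Proposition~\ref{thm:main-nils}, which identifies this same boundary $\iCLM$ index (the ``vertical'' edge at $\tau=0$) with minus the spectral flow of the family $A_\lambda$. No further analysis is required; the only content is bookkeeping of signs and a sign-reversal on the top edge of the rectangle.

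Concretely, I would proceed as follows. First, unfold the definition: by Definition~\ref{def:spectral-index}, $\ispec(A)=-\spfl(A_\lambda;\lambda\in[0,1])$. Second, apply Proposition~\ref{thm:main-nils}, which under hypothesis (H1) gives
\[
-\spfl(A_\lambda;\lambda\in[0,1]) \;=\; \iCLM\big(E^s_\lambda(0),E^u_\lambda(0);\lambda\in[0,1]\big).
\]
Third, invoke Proposition~\ref{thm:additivity} to expand the right-hand side as the sum of three $\iCLM$ contributions: the bottom edge $\tau\in[0,+\infty)$ at $\lambda=0$, the asymptotic edge $\lambda\in[0,1]$ at $\tau=+\infty$, and the reversed top edge at $\lambda=1$, which by the reversal property picks up the minus sign on the third term. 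Substituting yields exactly the displayed formula.

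The only ``checkable'' point is that Proposition~\ref{thm:additivity} and Proposition~\ref{thm:main-nils} are indeed applicable: the former requires the stratum homotopy constancy of $\dim(E^u_\lambda(\tau)\cap E^s_\lambda(\tau))$ along the rectangle boundary (guaranteed by Lemma~\ref{thm:geo-well-defined} and by (H1) together with \eqref{eq:conv-abbomendola}), and the latter requires the Cappell--Lee--Miller identity from \cite[Theorem~0.4]{CLM94} already used in its proof. Both hypotheses are in force under (H1), so the assembly is legitimate.

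Hence there is no genuine obstacle at this stage; the entire difficulty of the theorem has been absorbed into the two preceding propositions. In particular, the non-trivial analytic input (the stratum homotopy from $A_\lambda$ to the first-order elliptic operator $D_\lambda$ of Proposition~\ref{thm:prop-prima-uguaglianza}, together with the $\varepsilon$-spectral flow identity) has already been paid for, and what remains is purely combinatorial manipulation of $\iCLM$ along a contractible rectangle. The proof I would write is therefore essentially a one-paragraph concatenation: apply Proposition~\ref{thm:main-nils}, then Proposition~\ref{thm:additivity}, and conclude.
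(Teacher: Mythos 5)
Your proof is correct and matches the paper's own argument exactly: the paper's proof of Theorem~\ref{thm:spectral-flow-formula} is precisely the concatenation of Definition~\ref{def:spectral-index}, Proposition~\ref{thm:main-nils}, and Proposition~\ref{thm:additivity}. Your remarks on why the two propositions are applicable under (H1) are a helpful addition rather than a deviation.
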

\begin{proof}
By Definition \ref{def:spectral-index}, we know that $\ispec( A)= 
-\spfl( A;[0,1])$. The results is from  Proposition  
\ref{thm:main-nils} and  Proposition \ref{thm:additivity}. This conclude the 
proof. 
\end{proof}
As promised in Section \ref{sec:intro}, by using Theorem 
\ref{thm:spectral-flow-formula},
we are able to prove  a new spectral flow 
formula for Hamiltonian 
systems parametrised by bounded intervals.
Let $L, M \in \mathscr C^0\big([0,1];\Lagr(n)\big)$ and  let us consider 
the following family of Hamiltonian boundary value  problems
\begin{equation}\label{eq:Hamsysbounded}
\begin{cases}
   z'(t)= S_\lambda(t)\, z(t), \qquad t \in [a,b]\\
  \big(z(a), z(b)\big)\in L_\lambda \times M_\lambda.
 \end{cases}
\end{equation}
Following authors in 
\cite{Arn67, RS95, Lon02} and references therein, we associate to the 
Hamiltonian bvp given in Equation 
\eqref{eq:Hamsysbounded} the following Maslov index. 
Let $\gamma_{(\lambda,a)}(t)$ be denote  the two parameter family of 
matrix-valued maps 
$\gamma_{(\lambda,a)}: [a,b] \to \Sp(2n,\R)$  defined by
\begin{equation}\label{eq:cauchy-bounded}
 \begin{cases}
   \gamma'_{(\lambda,a)}(t)= S_\lambda(t)\,\gamma_{(\lambda,a)}(t), \qquad t 
\in [a,b]\\
  \gamma_{(\lambda,a)}(a)=\Id
 \end{cases}
\end{equation}
and we now consider the integer given by  $\iCLM(E_{\lambda, [a,b]}; \lambda \in 
[0,1])$,
where $E_{[a,b]}\in \P\big([0,1];\R^{2n}\big)$ is pointwise 
defined by $E_{[a,b]}(\lambda)\=\big( M_\lambda,\gamma_a(b)L_\lambda\big)$.
For  any $c \in (a,b)$, we consider the real-valued  functions on $[a,b]$ 
defined by
\begin{equation}\label{eq:alfa-beta}
 \alpha(t)\=
            \dfrac{(t-b)}{(a-b)}(c-b)+b  \ \ \textrm{ if }\ \  t \in [a,b]\\
           \quad
            \beta(t)\=
            \dfrac{(t-b)}{(a-b)}(c-a)+a  \ \ \textrm{ if } \ \ t \in [a,b].\\
\end{equation}
We observe that the function $\alpha$ is a positive affine reparameterisation 
of 
the interval $[c,b]$ with the same 
orientation whilst the function $\beta$ is  a negative affine 
reparameterisation 
of 
the interval  $[a,c]$ with the opposite orientation. For $c=(a+b)/2$ the 
functions 
introduced in Equation \eqref{eq:alfa-beta} reduce respectively to 
\[
  \alpha(t)=
            \dfrac12 (t-b) +b  \ \ \textrm{ if }\ \  t \in [a,b]\\
           \quad
            \beta(t)=
            \dfrac12(b-t) +a  \ \ \textrm{ if } \ \ t \in [a,b].
\]
Let $ c \in (a,b)$, $F\in \P([0,1];\R^{2n})$ be the continuous path of 
ordered pairs of Lagrangian paths pointwise defined by 
\[
 F(\lambda)\=\big( W_\lambda(c),V_\lambda(c)\big),
\]
where 
\begin{equation}
W_\lambda(c)\=\gamma^{-1}_a(b-c)M_\lambda.
\textrm{ and } V_\lambda(c)\=\gamma_{(a,\lambda)}(c)L_\lambda 
\end{equation}
\begin{lem}\label{thm:classical-maslov-comparison}
Under the previous notation, we have
\[
  \iCLM(F;[0,1])= \iCLM(E_{[a,b]};[0,1]).
\]
\end{lem}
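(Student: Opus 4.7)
The plan is to exploit the symplectic invariance of the $\iCLM$-index via a stratum homotopy between the two pairs of Lagrangian paths. Since a linear symplectic map preserves the dimension of intersection of any two Lagrangian subspaces, the key idea is to realize one pair as the image of the other under a continuous family (jointly in $s$ and $\lambda$) of symplectic matrices, and then to deform this family to the identity without affecting the index.

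Concretely, I would introduce the continuous two-parameter family
\[
\Psi: [0,1] \times [0,1] \to \Sp(2n, \R), \qquad \Psi(s, \lambda) \= \gamma_{(\lambda,a)}\big(b - s(b-c)\big)\, \gamma_{(\lambda,a)}(b)^{-1},
\]
so that $\Psi(0, \lambda) = \Id$ while $\Psi(1, \lambda) = \gamma_{(\lambda,a)}(c)\,\gamma_{(\lambda,a)}(b)^{-1}$. Interpreting the notation in Equation \eqref{eq:WeV}, the map $\Psi(1,\lambda)$ is the backward propagation of the linearised Hamiltonian flow from time $b$ to time $c$, and it therefore sends $M_\lambda$ to $W_\lambda(c)$ and $\gamma_{(\lambda,a)}(b) L_\lambda$ to $V_\lambda(c) = \gamma_{(\lambda,a)}(c) L_\lambda$.

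Next, I would look at the continuous two-parameter family of ordered Lagrangian pairs
\[
G(s, \lambda) \= \Big(\Psi(s, \lambda)\, M_\lambda,\ \Psi(s, \lambda)\, \gamma_{(\lambda,a)}(b)\, L_\lambda\Big),
\]
which equals $E_{[a,b]}(\lambda)$ at $s = 0$ and $F(\lambda)$ at $s = 1$. Since $\Psi(s, \lambda)$ is symplectic for every $(s, \lambda)$, the intersection dimension of the two components of $G(s, \lambda)$ equals $\dim\big(M_\lambda \cap \gamma_{(\lambda,a)}(b) L_\lambda\big)$, and in particular is independent of $s$. Thus the intersection dimensions at the endpoints $\lambda = 0$ and $\lambda = 1$ remain constant along the homotopy, and the stratum homotopy invariance of the $\iCLM$-index recalled in Section \ref{sec:spectral-flow-and-Maslov} yields $\iCLM(G(0, \cdot); [0,1]) = \iCLM(G(1, \cdot); [0,1])$, which is precisely the claim.

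The principal obstacle is purely notational: one must verify that $W_\lambda(c) = \gamma_a^{-1}(b-c) M_\lambda$ coincides with $\gamma_{(\lambda,a)}(c) \gamma_{(\lambda,a)}(b)^{-1} M_\lambda$, i.e., that $W_\lambda(c)$ is the backward propagation of $M_\lambda$ from time $b$ to time $c$ under the linearised system; this is the only interpretation consistent with both the geometric meaning of the pair $(W(c), V(c))$ living at time $c$ and with the identity the lemma asserts. Once this identification is in place, the argument requires no crossing forms or chart computations, only the two basic facts that symplectomorphisms preserve Lagrangian intersections and that the $\iCLM$-index is a stratum-homotopy invariant.
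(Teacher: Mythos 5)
Your proof is correct and takes essentially the same route as the paper: the paper's one-line proof simply applies the symplectic invariance of $\iCLM$ to move the transformation $\gamma_a(b-c)$ from one side to the other, whereas you unwind that same invariance into an explicit stratum homotopy $\Psi(s,\lambda)$ from the identity, which is precisely how symplectic invariance is derived from the stratum-homotopy axiom. You also correctly flag the notational reading $\gamma_a^{-1}(b-c)=\gamma_{(\lambda,a)}(c)\gamma_{(\lambda,a)}(b)^{-1}$ that the paper implicitly uses in the step $\gamma_a(b-c)\gamma_a(c)=\gamma_a(b)$.
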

\begin{proof}
The proof readily follows since
\begin{equation}
 \begin{split}
 &\iCLM\big(F;[0,1]\big)=   
\iCLM\big(\gamma^{-1}_a(b-c)M,
\gamma_a(c)L;[0,1]\big)\\
  &= 
\iCLM\big(M,\gamma_a(b-c)\gamma_a(c)L;[0,1]\big)= 
\iCLM\big(M,\gamma_a(b)L;[0,1]\big)=\iCLM(E_{[a,b]};[0,1]).
  \end{split}
  \end{equation}
This conclude the proof. 
\end{proof}
We denote by $ W^{1,2}([a,b];L_\lambda, M_\lambda)$ the Sobolev space defined 
by 
\[
 W^{1,2}([a,b],L_\lambda, M_\lambda)\=
 \Set{z \in W^{1,2}([a,b];\R^{2n})| z(a)\in L_\lambda \textrm{ 
and } z(b) \in  M_\lambda}
\]
and for each $\lambda \in [0,1]$, we define  the operators
\begin{equation}\label{eq:operatori-bounded}
 T_\lambda\= - J \frac{d}{dt} - B_\lambda(t):  W^{1,2}([a,b];L, M)
\subset L^2([a,b], \R^{2n})
 \longrightarrow L^2([a,b], \R^{2n}).
\end{equation}
It is well-known that for each $\lambda \in [0,1]$, 
$ T_\lambda$  is a closed unbounded 
selfadjoint Fredholm operator in $L^2([a,b], \R^{2n})$ having domain 
$  W^{1,2}([a,b];L_\lambda, M_\lambda)$  (cf. \cite{GGK90}, for instance) 
and hence it 
remains well-defined (gap continuous) path of closed selfadjoint Fredholm 
operators
\begin{equation}\label{eq:path-fredholm-2}
 T:[0,1]\ni \lambda \longmapsto T_\lambda \in  \cfsa(L^2([a,b],\R^{2n})).
\end{equation}

\begin{thm}\label{thm:main-2}\textbf{(Spectral flow formula on bounded 
intervals)}
In the previous notation, we have
\begin{multline}
 \ispec(T)=\iCLM\big(W_1\big(\beta(\tau)\big), 
V_1\big(\alpha(\tau)\big);[a,b]\big) + 
 \iCLM\big(M,L; [0,1]\big)\\ - \iCLM 
\big(W_0\big(\beta(\tau)\big), V_0\big(\alpha(\tau)\big); [a,b]\big) 
\end{multline}
\end{thm}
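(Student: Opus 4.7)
The plan is to apply a rectangle stratum-homotopy argument, in the spirit of Proposition \ref{thm:additivity}, but to the two-parameter family
\[
h : [a,b] \times [0,1] \to \Lagr(n) \times \Lagr(n), \qquad h(t, \lambda) \= \bigl(M_\lambda,\, \gamma_{(a,\lambda)}(t) L_\lambda\bigr),
\]
rather than to the $(W_\lambda, V_\lambda)$-family. The four edges of this rectangle will be directly identifiable with the four ingredients of the formula: $\ispec(T)$, $\iCLM(M,L;[0,1])$, $\igeob(w_0)$, and $\igeob(w_1)$, and the rectangle identity will yield the theorem in a single step.

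First, I would establish the bounded-interval analog of Proposition \ref{thm:main-nils}:
\[
\ispec(T) = \iCLM\bigl(M_\lambda,\, \gamma_{(a,\lambda)}(b) L_\lambda;\ \lambda \in [0,1]\bigr).
\]
This is the classical spectral flow formula for bounded Hamiltonian systems, which follows by the rescaling/stratum-homotopy argument of Proposition \ref{thm:prop-prima-uguaglianza} (deforming $T_\lambda$ to a Dirac-type operator $-J\,d/dt$ on a bounded interval with Lagrangian boundary conditions prescribed by $L_\lambda$ and $\gamma_{(a,\lambda)}(b) L_\lambda$), combined with Theorem~0.4 of \cite{CLM94}; equivalently, one passes through Lemma \ref{thm:classical-maslov-comparison}.

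Next, the four edges of the rectangle traversed by $h$ are identified as follows. At $t = a$ the pair is $(M_\lambda, L_\lambda)$, so the left-edge Maslov index is $\iCLM(M,L;[0,1])$. At $t = b$ the pair is $(M_\lambda, \gamma_{(a,\lambda)}(b) L_\lambda)$, giving right-edge Maslov index $\ispec(T)$ by Step~1. At $\lambda = 0$, the pair is $(M_0, \gamma_{(a,0)}(t) L_0)$ for $t \in [a,b]$; this is, by the classical definition of the geometrical index recalled in the remark following Definition \ref{def:geometrical-index-bounded} and by Lemma \ref{thm:classical-maslov-comparison}, precisely $\igeob(w_0)$. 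Analogously, the top-edge Maslov index is $\igeob(w_1)$.

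Finally, since the rectangle $[a,b] \times [0,1]$ is contractible, the two paths from the corner $(a,0)$ to the corner $(b,1)$---the concatenation (bottom)$*$(right) and the concatenation (left)$*$(top)---are stratum-homotopic relative to their endpoints (whose intersection dimensions are fixed throughout). By path additivity of $\iCLM$ their Maslov indices coincide:
\[
\igeob(w_0) + \ispec(T) = \iCLM(M,L;[0,1]) + \igeob(w_1),
\]
which, upon rearrangement, is the theorem. The principal care-point is Step~1: the bounded-interval CLM formula must be applied with the correct sign, i.e.\ with the convention $\ispec \= -\spfl$ compensating for the swap of arguments in the CLM index (cf.\ the discussion surrounding Proposition \ref{thm:main-nils}, and Theorem~3.1 of \cite{ZL99} for the precise antisymmetry of $\iCLM$ up to endpoint corrections).
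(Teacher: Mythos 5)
Your proposal is correct and follows the same rectangle stratum-homotopy blueprint as the paper's proof, but you carry it out on a different (and arguably cleaner) two-parameter family. The paper deforms $T_\lambda$ to the Dirac-type operator with boundary conditions $\bigl(V_\lambda(c), W_\lambda(c)\bigr)$, invokes \cite[Theorem 0.4]{CLM94} to obtain $\ispec(T)=\iCLM\bigl(W_\lambda(c),V_\lambda(c);[0,1]\bigr)$ (the minus sign written in the paper's own proof appears to be a typo; your sign matches Proposition~\ref{thm:main-nils}), and then argues by concatenation on the rectangle in the $(W_\lambda(\beta(\tau)),V_\lambda(\alpha(\tau)))$-family, so that the bottom and top edges are \emph{by definition} $\igeob(w_0)$ and $\igeob(w_1)$, while the far edge at $\tau=b$ has to be \emph{identified} with $\iCLM(M,L;[0,1])$ through (a variant of) Lemma~\ref{thm:classical-maslov-comparison}. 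You instead run the rectangle on the classical family $h(t,\lambda)=\bigl(M_\lambda,\gamma_{(a,\lambda)}(t)L_\lambda\bigr)$, for which the left edge is transparently $\iCLM(M,L;[0,1])$ and the right edge is the bounded spectral flow formula; the identification that then requires Lemma~\ref{thm:classical-maslov-comparison} is pushed onto the bottom and top edges, where you equate $\iCLM\bigl(M_i,\gamma_{(a,i)}(t)L_i;[a,b]\bigr)$ with $\igeob(w_i)$. Your arrangement has the advantage of making $\iCLM(M,L)$ and $\ispec(T)$ the trivially readable edges; the trade-off is that you lean on the assertion in the remark following Definition~\ref{def:geometrical-index-bounded} that the two definitions of the bounded geometrical index coincide, whereas Lemma~\ref{thm:classical-maslov-comparison} as stated only proves a $\lambda$-parametrized version of this and moreover uses the composition identity $\gamma_a(b-c)\gamma_a(c)=\gamma_a(b)$, which in general holds only for autonomous systems. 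This is a gap you inherit from the paper itself rather than introduce, since the paper's own proof needs the same lemma at the far edge; a rigorous treatment should establish the $\tau$-parametrized analogue (for instance by the symplectic-invariance and nullity properties of $\iCLM$ together with a two-parameter homotopy in $c$ and $\tau$), or else replace $\gamma_a^{-1}(b-c)$ with the correct non-autonomous backward propagator in Definition~\ref{def:geometrical-index-bounded}.
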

\begin{proof}
We define  the operators
\begin{equation}\label{eq:operatori-bounded1}
 D_\lambda\= - J \frac{d}{dt} :  W^{1,2}([a,b];V_\lambda(c), W_\lambda(c))
\subset L^2([a,b], \R^{2n})
 \longrightarrow L^2([a,b], \R^{2n}).
\end{equation}
By arguing as inthe proof of Proposition \ref{thm:prop-prima-uguaglianza}, we 
have 
$\spfl(D_\lambda ;\lambda \in [0,1])=\spfl(T_\lambda;\lambda \in [0,1])$. 
 We 
get also that 
\[
 \ispec(T)=-\iCLM(W_\lambda(c), V_\lambda(c);\lambda \in [0,1]).
\]
Arguing as in the proof of Theorem \ref{thm:spectral-flow-formula} and using  
the additivity properties of the $\iCLM$-index, we get the result. This conclude 
the proof. 
\end{proof}
\begin{cor}\label{thm:mainbounded}
 In the assumption of Theorem \ref{thm:main-2}, if $(L_\lambda, M_\lambda) 
\equiv (L,M) \in \Lagr(n)\times \Lagr(n)$, 
 then we have
\begin{equation}
 \ispec(T)=\iCLM\big(W_1\big(\beta(\tau)\big), 
V_1\big(\alpha(\tau)\big);\tau \in [a,b]\big)  - \iCLM 
\big(W_0\big(\beta(\tau)\big), V_0\big(\alpha(\tau)\big);\tau \in  [a,b]\big). 
\end{equation}
\end{cor}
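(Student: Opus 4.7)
The plan is to derive the corollary directly from Theorem \ref{thm:main-2} by showing that the middle term $\iCLM(M, L;[0,1])$ in the spectral flow formula vanishes under the constancy hypothesis $(L_\lambda, M_\lambda) \equiv (L,M)$. This is purely a matter of invoking the formal properties of the $\iCLM$-index recalled in Section \ref{sec:spectral-flow-and-Maslov}.

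First I would unfold the notation: under the standing assumption, the pair of Lagrangian paths appearing in the middle term of Theorem \ref{thm:main-2} is the constant pair $\lambda \mapsto (M_\lambda, L_\lambda) = (M, L)$, where both components are independent of the parameter $\lambda \in [0,1]$. In particular, the intersection $M_\lambda \cap L_\lambda = M \cap L$ is a fixed subspace, so $\dim\bigl(M_\lambda \cap L_\lambda\bigr)$ is constant in $\lambda$ and the intersection itself varies (trivially) continuously.

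Next I would invoke the Nullity property of the $\iCLM$-index listed in Section \ref{sec:spectral-flow-and-Maslov}: whenever a pair $(L_1, L_2) \in \P([a,b]; \R^{2n})$ satisfies that $\dim(L_1(t) \cap L_2(t))$ is independent of the parameter and $L_1(t) \cap L_2(t)$ varies continuously, the associated Maslov index vanishes. Applied to our constant pair $(M, L)$, this immediately yields $\iCLM(M, L; [0,1]) = 0$.

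Substituting this vanishing into the identity of Theorem \ref{thm:main-2} gives
\[
\ispec(T) = \iCLM\bigl(W_1(\beta(\tau)), V_1(\alpha(\tau)); \tau \in [a,b]\bigr) - \iCLM\bigl(W_0(\beta(\tau)), V_0(\alpha(\tau)); \tau \in [a,b]\bigr),
\]
which is precisely the claim. I do not anticipate any genuine obstacle: the corollary is a clean specialization of Theorem \ref{thm:main-2}, and the only nontrivial step, the vanishing of the boundary term, is an immediate application of the Nullity property for constant Lagrangian paths.
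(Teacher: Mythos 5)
Your proof is correct and follows the same route the paper indicates: the corollary is a direct specialization of Theorem \ref{thm:main-2} once one observes, via the Nullity property of the $\iCLM$-index applied to the constant pair $(M,L)$, that $\iCLM(M,L;[0,1])=0$.
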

Inspired by the classical Morse-type Index Theorem for periodic 
solution  of Hamiltonian system (cf. \cite{Lon02} and 
references therein) we now prove a  Morse-type index Theorem for unbounded 
motions of a Hamiltonian system. 

{\bf Proof of Theorem 1.\/}
The proof of this result  in the case of heteroclinic/homoclinic motions  
immediately follows by Theorem \ref{thm:spectral-flow-formula}, 
Definition \ref{def:geometrical-index} and the previous discussion.  The 
other two formulas in Theorem \ref{thm:main1-intro} on the future and past 
halfclinic orbits,
can be directly obtained by the previous one simply by 
setting for $i=0,1$ simply 
$E^u_i(-\tau)\equiv L_i$ in the case of future heteroclinic orbit and 
$E^s_i(\tau)\equiv L_i$ in the case of past heteroclinic orbit for any $\tau 
\in [0,+\infty)$. This 
conclude the proof.

{\bf Proof of Corollary \ref{thm:main2-intro} and Corollary 
\ref{thm:main3-final-intro}.\/}
Let us consider on the  space $\mathcal W$  
the  path of closed selfadjoint Fredholm operators pointwise respectively given 
by  
$A_\lambda\=-J\dfrac{d}{dt} - B_*-\lambda\big(B(t)-B_*\big)$. 
We observe that $\igeo(w_0) =0$ since $E^u_0(\tau),E^s_0(\tau)$ is constant. 
Similarly,   $E^s_\lambda(+\infty),E^u_\lambda(-\infty)$ not depend on $\lambda$ 
implies 
$ \iCLM\big(E^s_\lambda(+\infty),E^u_\lambda(-\infty); \lambda \in 
[0,1]\big)=0$.
This conclude the proof of Corollary \ref{thm:main2-intro}. 
The proof of  Corollary \ref{thm:main3-final-intro} is completely analogous. 

{\bf Proof of Theorem \ref{thm:main-2-intro}.\/} The proof of this result 
readily follows 
by Definition \ref{def:geometrical-index-bounded} and Theorem \ref{thm:main-2}. 
This conclude the proof.


\vspace{0.5cm}
	\noindent
	\textsc{Prof. Xijun Hu}\\
	Department of Mathematics,
	Shandong University\\
	Jinan, Shandong, 250100 \\
	The People's Republic of China,
	China\\
	E-mail: \email{xjhu@sdu.edu.cn}

\vspace{0.5cm}
\noindent
\textsc{Prof. Alessandro Portaluri}\\
DISAFA, 
Università degli Studi di Torino\\
Largo Paolo Braccini 2 \\
10095 Grugliasco, Torino, 
Italy\\
Website: \url{aportaluri.wordpress.com}\\
E-mail: \email{alessandro.portaluri@unito.it}

\vspace{0.5cm}
\noindent
COMPAT-ERC Website: \url{https://compaterc.wordpress.com/}\\
COMPAT-ERC Webmaster \& Webdesigner: Arch.  Annalisa Piccolo

\end{document}